\documentclass[12pt,reqno]{amsart}
\usepackage{latexsym}
\usepackage{amssymb}
\usepackage{mathrsfs}
\usepackage{amsmath}
\usepackage{fancybox,color}
\usepackage{enumerate}
\usepackage[latin1]{inputenc}
\usepackage[colorlinks=true, linkcolor=magenta, citecolor=magenta]{hyperref}

\makeatletter
\@namedef{subjclassname@2020}{\textup{2020} Mathematics Subject Classification}
\makeatother

\usepackage{color}
{\catcode`p =12 \catcode`t =12 \gdef\eeaa#1pt{#1}}      
\def\accentadjtext#1{\setbox0\hbox{$#1$}\kern   
                \expandafter\eeaa\the\fontdimen1\textfont1 \ht0 }
\def\accentadjscript#1{\setbox0\hbox{$#1$}\kern 
                \expandafter\eeaa\the\fontdimen1\scriptfont1 \ht0 }
\def\accentadjscriptscript#1{\setbox0\hbox{$#1$}\kern   
                \expandafter\eeaa\the\fontdimen1\scriptscriptfont1 \ht0 }
\def\accentadjtextback#1{\setbox0\hbox{$#1$}\kern       
                -\expandafter\eeaa\the\fontdimen1\textfont1 \ht0 }
\def\accentadjscriptback#1{\setbox0\hbox{$#1$}\kern     
                -\expandafter\eeaa\the\fontdimen1\scriptfont1 \ht0 }
\def\accentadjscriptscriptback#1{\setbox0\hbox{$#1$}\kern 
                -\expandafter\eeaa\the\fontdimen1\scriptscriptfont1 \ht0 }
\def\itoverline#1{{\mathsurround0pt\mathchoice
        {\rlap{$\accentadjtext{\displaystyle #1}
                \accentadjtext{\vrule height1.593pt}
                \overline{\phantom{\displaystyle #1}
                \accentadjtextback{\displaystyle #1}}$}{#1}}
        {\rlap{$\accentadjtext{\textstyle #1}
                \accentadjtext{\vrule height1.593pt}
                \overline{\phantom{\textstyle #1}
                \accentadjtextback{\textstyle #1}}$}{#1}}
        {\rlap{$\accentadjscript{\scriptstyle #1}
                \accentadjscript{\vrule height1.593pt}
                \overline{\phantom{\scriptstyle #1}
                \accentadjscriptback{\scriptstyle #1}}$}{#1}}
        {\rlap{$\accentadjscriptscript{\scriptscriptstyle #1}
                \accentadjscriptscript{\vrule height1.593pt}
                \overline{\phantom{\scriptscriptstyle #1}
                \accentadjscriptscriptback{\scriptscriptstyle #1}}$}{#1}}}}


\newcommand{\ed}{\color{black}} 
\newcommand{\iol}{\itoverline}

%




\usepackage[left=1.8cm,top=2.3cm,right=1.8cm]{geometry}

\geometry{a4paper,centering}

\DeclareMathOperator{\diam}{diam}

\newtheorem{theorem}{Theorem}[section]
\newtheorem{corollary}[theorem]{Corollary}
\newtheorem{lemma}[theorem]{Lemma}

\theoremstyle{definition}
\newtheorem{definition}[theorem]{Definition}

\newcommand{\rad}{\operatorname{rad}}

\newcommand{\R}{{\mathbb R}}

\newcommand{\N}{{\mathbb N}}
\newcommand{\Z}{{\mathbb Z}}

\newcommand{\Ha}{{\mathcal H}}

\DeclareMathOperator{\ldima}{\underline{dim}_A}
\DeclareMathOperator{\ucodima}{\overline{co\,dim}_A}

\DeclareMathOperator{\Cp}{cap}

\newcommand{\ch}[1]{{\mbox{\raise 1pt\hbox{\Large$\chi$}}}_{\lower 1pt\hbox{$\scriptstyle #1\,$}}}

\def\1{\raisebox{2pt}{\rm{$\chi$}}}

%
%
%
%
\def\vint_#1{\mathchoice%
        {\mathop{\kern 0.2em\vrule width 0.6em height 0.69678ex depth -0.58065ex
                \kern -0.8em \intop}\nolimits_{\kern -0.4em#1}}%
        {\mathop{\kern 0.1em\vrule width 0.5em height 0.69678ex depth -0.60387ex
                \kern -0.6em \intop}\nolimits_{#1}}%
        {\mathop{\kern 0.1em\vrule width 0.5em height 0.69678ex
            depth -0.60387ex
                \kern -0.6em \intop}\nolimits_{#1}}%
        {\mathop{\kern 0.1em\vrule width 0.5em height 0.69678ex depth -0.60387ex
                \kern -0.6em \intop}\nolimits_{#1}}}
\def\vintslides_#1{\mathchoice%
        {\mathop{\kern 0.1em\vrule width 0.5em height 0.697ex depth -0.581ex
                \kern -0.6em \intop}\nolimits_{\kern -0.4em#1}}%
        {\mathop{\kern 0.1em\vrule width 0.3em height 0.697ex depth -0.604ex
                \kern -0.4em \intop}\nolimits_{#1}}%
        {\mathop{\kern 0.1em\vrule width 0.3em height 0.697ex depth -0.604ex
                \kern -0.4em \intop}\nolimits_{#1}}%
        {\mathop{\kern 0.1em\vrule width 0.3em height 0.697ex depth -0.604ex
                \kern -0.4em \intop}\nolimits_{#1}}}

\newcommand{\intav}{\vint}
\newcommand{\aveint}[2]{\mathchoice%
        {\mathop{\kern 0.2em\vrule width 0.6em height 0.69678ex depth -0.58065ex
                \kern -0.8em \intop}\nolimits_{\kern -0.45em#1}^{#2}}%
        {\mathop{\kern 0.1em\vrule width 0.5em height 0.69678ex depth -0.60387ex
                \kern -0.6em \intop}\nolimits_{#1}^{#2}}%
        {\mathop{\kern 0.1em\vrule width 0.5em height 0.69678ex depth -0.60387ex
                \kern -0.6em \intop}\nolimits_{#1}^{#2}}%
        {\mathop{\kern 0.1em\vrule width 0.5em height 0.69678ex depth -0.60387ex
                \kern -0.6em \intop}\nolimits_{#1}^{#2}}}

\newcommand{\dist}{\operatorname{dist}}

\usepackage{ifthen}
\newcommand{\formula}[2][nolabel]%
{%
 \ifthenelse{\equal{#1}{nolabel}}%
 {\begin{align*} #2 \end{align*}}%
 {%
  \ifthenelse{\equal{#1}{}}%
  {\begin{align} #2 \end{align}}%
  {\begin{align} \label{#1} \begin{aligned} #2 \end{aligned} \end{align}}%
 }%
}

\title[Fractional Poincar\'e and Hardy inequalities]{Fractional Poincar\'e and localized Hardy\\ inequalities on metric spaces}

\subjclass[2020]{
26D15   
(31C15,	
35A23,  
46E36)} 


\author[B.\! Dyda]{Bart{\l}omiej Dyda}   
\address[B.D.]{Faculty of Pure and Applied Mathematics\\ Wroc{\l}aw University 
	of Science and Technology\\
	Wybrze\.ze Wyspia\'nskiego 27,
	50-370 Wroc{\l}aw, Poland
}
\email{bdyda@pwr.edu.pl\qquad dyda@math.uni-bielefeld.de}
\thanks{B.D. was partially supported by grant NCN 2015/18/E/ST1/00239.}

\author[J.\! Lehrb\"ack]{Juha Lehrb\"ack}

\address[J.L.]{University of Jyvaskyla, Department of Mathematics and Statistics, P.O. Box 35, FI-40014 University of Jyvaskyla, Finland} \email{juha.lehrback@jyu.fi}

\author[A.V.\! V\"ah\"akangas]{Antti V. V\"ah\"akangas}

\address[A.V.V.]{University of Jyvaskyla, Department of Mathematics and Statistics, P.O. Box 35, FI-40014 University of Jyvaskyla, Finland} \email{antti.vahakangas@iki.fi}

\date{\today}

\pagestyle{headings}

\begin{document}

\begin{abstract}
We prove fractional Sobolev--Poincar\'e inequalities, 
capacitary versions of fractional Poincar\'e inequalities,
and pointwise and localized fractional Hardy inequalities
in a metric space equipped with a doubling measure.
Our results generalize and extend earlier work
where such inequalities have been considered 
in the Euclidean spaces or in the non-fractional
setting in metric spaces. The results concerning
pointwise and localized variants of fractional Hardy inequalities
are new even in the Euclidean case.
\end{abstract}

\maketitle

\section{Introduction}

Let $X=(X,d,\mu)$ be a metric measure space
and let $1\le p,q,t<\infty$ and $0<s<1$. 
The fractional $(s,q,p,t)$-Poincar\'e (or Sobolev--Poincar\'e)
inequality on $X$ 
reads as
\begin{equation}\label{e.poinc_intro}
\biggl(\intav_B \lvert u(x)-u_B\rvert^q\,dx\biggr)^{1/q}
\le c_Pr^{s}\biggl(\intav_{\lambda B}\biggl( \int_{\lambda B}\frac{|u(x)-u(y)|^t}{d(x,y)^{st}\mu(B(x,d(x,y)))}\,dy\biggr)^{p/t}\,dx\biggr)^{1/p},
\end{equation}
where $dx=d\mu(x)$ and $dy=d\mu(y)$.
We say that $X$  
supports a $(s,q,p,t)$-Poincar\'e inequality if
there are constants $c_P>0$ and $\lambda \ge 1$ such that 
inequality~\eqref{e.poinc_intro}
holds for every ball $B=B(x_0,r)\subset X$
and for all functions $u\colon X\to \R$ that are
integrable on balls. 

If $q\le \min\{p,t\}$ and the measure $\mu$ is doubling, 
then it is straightforward to show that the space $X$ 
supports a $(s,q,p,t)$-Poincar\'e inequality;
see Lemma~\ref{l.qp}. 
This is quite different compared to the usual (i.e.\ non-fractional) Poincar\'e inequalities,
whose validity in a metric measure space is usually an indication of
the existence of a rich geometric structure in the space;
we refer to the monographs~\cite{MR2867756,HeinonenKoskelaShanmugalingamTyson2015}
for more explanation and examples.

The main goal in this work is to prove stronger variants of
fractional inequalities, 
such as (Sobolev--)Poincar\'e inequalities for $q>p$, capacitary versions of Poincar\'e inequalities,
and pointwise and localized Hardy inequalities.
The validity of these stronger variants
often requires additional assumptions on the space and the functions and 
sets in the
inequalities. For example, in the so-called boundary Poincar\'e inequalities
the mean value $u_B$ on the left-hand side of~\eqref{e.poinc_intro}
can be omitted if the set where $u=0$ (i.e.\ the ``boundary'')
is large enough. 

The parameter $1\le  t<\infty$
in inequality~\eqref{e.poinc_intro} 
allows certain flexibility in the applications,
for instance in the proof of
the localized fractional Hardy inequality
\begin{equation}\label{eq.local_hardy_intro}
\int_{B\setminus E} \frac{\lvert u(x)\rvert^p}{d(x,E)^{sp}}\,dx
\le C \int_{\lambda B}\int_{\lambda B}
\frac{\lvert u(x)-u(y)\rvert^p}{d(x,y)^{sp}\mu(B(x,d(x,y)))}\,dy\,dx,
\end{equation}
where $0<s<1$, $1<p<\infty$, $\lambda\ge 1$,
$E\subset X$ is a closed set,
$B=B(w,r)$ for some $w\in E$ and $0<r<\diam(E)$, and 
$u\colon X\to\R$ is a continuous function with $u=0$ on $E$.
As one of our main results we show that the validity of inequality~\eqref{eq.local_hardy_intro}
is essentially characterized by dimensional information related to the set $E$.
More precisely, $\ucodima(E) < sp$ is sufficient and $\ucodima(E)\le sp$ is necessary
for~\eqref{eq.local_hardy_intro}, where $\ucodima(E)$ is the upper Assouad codimension of
$E$, see Definition~\ref{def.ucodima}. 
The upper bound for this codimension means that the set $E$
must be sufficiently large in comparison to the size of the ambient space $X$.  
In the Euclidean case $X=\R^n$ we have $\ucodima(E)=n-\ldima(E)$, where $\ldima$ is the
lower dimension (or lower Assouad dimension) of $E\subset\R^n$.



We obtain the localized inequality~\eqref{eq.local_hardy_intro} as a consequence of
a pointwise fractional Hardy inequality, given in terms of a maximal operator.
In the non-fractional case in $\R^n$, 
pointwise Hardy inequalities were introduced in~\cite{Hajlasz1999} and~\cite{KinnunenMartio1996}.
Sufficient and necessary
conditions for pointwise Hardy inequalities in metric spaces have been given 
in~\cite{KorteEtAl2011};
see also~\cite{Lehrback2014} for weighted variants.
Fractional Hardy inequalities on open sets
have been studied 
in the Euclidean space $\R^n$
for instance in~\cite{Dyda2004,DydaKijaczko,DydaVahakangas2015,EdmundsHurriSyrjanenVahakangas2014,Ihnatsyeva_et_al2014,LossSloane2010}
and in general metric spaces in~\cite{DydaEtAl2019,DydaVahakangas2014},
but the present pointwise and localized versions of fractional Hardy inequalities,
as well as the boundary Poincar\'e inequalities, are new even in the Euclidean case.
Sobolev--Poincar\'e
inequalities have also been considered in
more general sets than balls, 
in particular in the so-called John domains,
see~\cite{MR4018755,HurriSyrjanenVahakangas2013,HurriSyrjanenVahakangas2015}
and the references therein.

The outline for the rest of the paper is as follows.
In Section~\ref{s.preli} we review the necessary
definitions and notation on metric measure spaces
and give in Lemma~\ref{l.qp} the basic versions of 
fractional Poincar\'e inequalities for $q\le p$;
these are used as a starting point in the proofs of the stronger inequalities
in the subsequent sections.
Section~\ref{s.sobo} is devoted to extending the range in the
fractional (Sobolev--)Poincar\'e inequalities to $q>p$, following the ideas in the proofs
of the corresponding fractional results in the Euclidean case~\cite{DIV} as well in the non-fractional
results in metric spaces~\cite{MR2867756}.
In Section~\ref{s.capacitary} we introduce a variant of the relative fractional
capacity and prove a Maz$'$ya type capacitary Poincar\'e inequality in Theorem~\ref{t.Mazya}. 
Boundary Poincar\'e inequalities are obtained in Theorem~\ref{t.assouad}
and Corollary~\ref{cor.bdry_poinc} under
the dimensional condition $\ucodima(E) < sp$,
which is connected to the relative capacity via suitable Hausdorff contents;
see Definition~\ref{def.Hcont} and Lemma~\ref{l.codim}.
In Section~\ref{s.hardy}, the localized Hardy inequality~\eqref{eq.local_hardy_intro} 
is obtained in Theorem~\ref{t.integrated}
as a consequence of a pointwise fractional Hardy inequality, see Theorem~\ref{t.pointwise},
which in turn is based on the boundary Poincar\'e inequality in Theorem~\ref{t.assouad}.
Theorem~\ref{t.converse} then shows the necessity of the condition $\ucodima(E) \le sp$
for the localized inequality~\eqref{eq.local_hardy_intro}. 
In Sections~\ref{s.capacitary} and~\ref{s.hardy}  
our proofs often follow the main lines of the 
proofs from the non-fractional case,
as for instance in~\cite{MR2867756,HeinonenKoskelaShanmugalingamTyson2015,KorteEtAl2011}, but
due to the non-locality of the setting several modifications are needed in the proofs.

\section{Preliminaries}\label{s.preli}

We assume throughout this paper that $X=(X,d,\mu)$ is 
a metric measure space (with at least two points),  
where
$\mu$ is a Borel measure supported on $X$ such that 
$\mu(\{x\})=0$ for all $x\in X$ and
$0<\mu(B)<\infty$ for
all (open) balls 
\[B=B(x,r):= \{y\in X : d(x,y)< r\}\]
with $x\in X$ and $r>0$. We make the tacit assumption that each ball $B\subset X$ has a fixed center $x_B$ 
and radius $\rad(B)$, and thus notation such as $\lambda B = B(x_B,\lambda \rad(B))$ 
is well-defined for all $\lambda>0$. When $E,F\subset X$, we let $\diam(E)$ denote the diameter of $E$ and $\dist(E,F)$ is the distance between the sets $E,F\subset X$. We  use $d(x,E)=\dist(x,E)=\dist(\{x\},E)$ to denote the distance from a point $x\in X$ to the set $E$.
If $E\subset X$, then $\ch{E}$
denotes the characteristic function of $E$; that is, $\ch{E}(x)=1$ if $x\in E$
and $\ch{E}(x)=0$ if $x\in X\setminus E$.

We also assume throughout
that $\mu$ is \emph{doubling}, that is, there is a constant $c_D\ge  1$ such that 
whenever $x\in X$ and $r>0$, we have
\begin{equation}\label{e.doubling}
  \mu(B(x,2r))\le c_D\, \mu(B(x,r)).
\end{equation}
Iteration of~\eqref{e.doubling} shows that if $\mu$ is doubling, then there exist an exponent $Q>0$ and a constant $c_Q>0$,
both only depending on $c_D$, such that the quantitative doubling condition
\begin{equation}\label{e.doubling_quant}
  \frac{\mu(B(y,r))}{\mu(B(x,R))}\ge c_Q\Bigl(\frac {r}{R}\Bigr)^Q
\end{equation}
holds whenever $y\in B(x,R)\subset X$ and $0<r<R$. 
Condition~\eqref{e.doubling_quant} always holds
for $Q\ge \log_2c_D$,  but it can hold for smaller values of $Q$ as well. See~\cite[Lemma~3.3]{MR2867756}
for details. 

In some of our results we also need
to assume that $\mu$ is
\emph{reverse doubling}, in the sense that 
there are constants  $0<\kappa<1$ \ed and $0<c_R<1$ such that
\begin{equation}\label{e.rev_dbl_decay}
\mu(B(x,\kappa r))\le c_R\, \mu(B(x,r))
\end{equation}
for every $x\in X$ and 
$0<r<\diam(X)/2$.
If $X$ is connected and $0<\kappa<1$, then
inequality~\eqref{e.rev_dbl_decay} follows from the doubling property~\eqref{e.doubling} 
with $0<c_R=c_R(c_D,\kappa)<1$. See for instance~\cite[Lemma~3.7]{MR2867756}.
Iteration of~\eqref{e.rev_dbl_decay} shows that
if $\mu$ is reverse doubling, then there exist
an exponent $\sigma>0$ and a constant $c_\sigma>0$,
both only depending on $\kappa$ and $c_R$, such that the quantitative 
 reverse doubling condition
\begin{equation}\label{e.reverse_doubling}
 \frac{\mu(B(x,r))}{\mu(B(x,R))} \le c_\sigma\Bigl(\frac{r}{R}\Bigr)^\sigma
\end{equation}
holds for every $x\in X$ and 
$0<r<R<2\diam(X)$.

If the measure  $\mu$ is Ahlfors $Q$-regular for some $Q>0$, that is, there is a constant $C>0$
such that 
\[
\frac 1 C r^Q\le \mu(B(x,r))\le Cr^Q
\] 
for every $x\in X$ and $0<r<\diam(X)$,
then $\mu$ is both doubling and reverse doubling, and 
 the quantitative estimates \eqref{e.doubling_quant} and~\eqref{e.reverse_doubling} hold with
the  exponent $Q$.


We abbreviate $d\mu(x)=dx$ and say that a function $u\colon X\to\R$ is integrable
on balls, if $u$ is $\mu$-measurable and
\[\lVert u\rVert_{L^1(B)}=\int_B \lvert u(x)\rvert\,dx<\infty\] for all
balls $B\subset X$. In particular, for such functions the
integral average
\[
u_B = \intav_B u(x)\,dx =\frac{1}{\mu(B)}\int_B u(x)\,dx
\]
is well-defined whenever $B$ is a ball in $X$.
Observe that we do not  always assume that the space $X$ is complete, 
and hence continuous functions are not necessarily integrable on balls.

\begin{definition}\label{def.poinc}
Let $1\le p,q,t<\infty$ and $0<s<1$. 
We say that $X$  
supports a $(s,q,p,t)$-Poincar\'e inequality, if
there are constants $c_P>0$ and $\lambda \ge 1$ such that inequality
\begin{equation}\label{e.poinc}
\biggl(\intav_B \lvert u(x)-u_B\rvert^q\,dx\biggr)^{1/q}
\le c_Pr^{s}\biggl(\intav_{\lambda B}\biggl( \int_{\lambda B}\frac{|u(x)-u(y)|^t}{d(x,y)^{st}\mu(B(x,d(x,y)))}\,dy\biggr)^{p/t}\,dx\biggr)^{1/p}
\end{equation}
holds for every ball $B=B(x_0,r)\subset X$
and for all functions $u\colon X\to \R$  that are
integrable on balls. 
\end{definition}

In particular the left-hand side of \eqref{e.poinc}
is finite, if the right-hand side is finite.

If $u\colon X\to\R$ is a measurable function, $0<s<1$, $1\le t<\infty$, and $A\subset X$ is a measurable set, we write 
\[
g_{u,s,t,A}(x)=\biggl(\int_{A} \frac{\vert u(x)-u(y)\vert^t}{d(x,y)^{st}\mu(B(x,d(x,y)))}\,dy\biggr)^{1/t},\qquad \text{ for every } x\in X.
\]
Using this notation, 
the $(s,q,p,t)$-Poincar\'e inequality \eqref{e.poinc}
can be written as
\begin{equation*}
\biggl(\intav_B \lvert u(x)-u_B\rvert^q\,dx\biggr)^{1/q}
\le c_Pr^{s}\biggl(\intav_{\lambda B}g_{u,s,t,\lambda B}(x)^p\,dx\biggr)^{1/p}.
\end{equation*}
We will repeatedly use the 
facts that $g_{\lvert u\rvert,s,t,A}\le g_{u,s,t,A}$ and $g_{u,s,t,A}\le g_{u,s,t,A'}$ when $A\subset A'$.

The following lemma shows that $X$ supports a $(s,q,p,t)$-Poincar\'e inequality
if $1\le q \le \min\{p,t\}$.  
We emphasize that \ed the doubling condition
on $\mu$ is the only quantitative property of $X$ that
is needed in this case. 
This result is certainly 
known among experts,  but  we include
the short proof for the \ed convenience of the reader. We refer to \cite[Lemma 2.2]{HurriSyrjanenVahakangas2013} for a
variant of this result in $\R^n$.

\begin{lemma}\label{l.qp}
Assume that $1\le q,p,t<\infty$, $q\le \min\{p,t\}$, and $0<s<1$. Then
$X$ supports the $(s,q,p,t)$-Poincar\'e inequality \eqref{e.poinc}
with constants $\lambda=1$ and $c_P=c_P(s,q,t,c_D)$.
\end{lemma}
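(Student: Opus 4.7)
The argument is a short calculation that combines three applications of Jensen's inequality with a crude use of the doubling property, and yields the Poincar\'e inequality with $\lambda=1$ and a constant $c_P$ depending only on $s,t,c_D$ (in particular not on $p$ or $q$ beyond the range constraints).

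The first step is the pointwise reduction
\[
|u(x)-u_B|^q \;\le\; \biggl(\intav_B |u(x)-u(y)|^t\,dy\biggr)^{q/t}\qquad \text{for every }x\in B.
\]
To see this, observe that $|u(x)-u_B|\le \intav_B |u(x)-u(y)|\,dy$ because $u_B$ is a mean, and then an application of Jensen's inequality with the convex function $z\mapsto z^t$ (legitimate because $t\ge 1$) upgrades the $L^1$-average on the right to an $L^t$-average; raising both sides to the $q$-th power preserves the inequality.

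The second step produces the fractional kernel and uses doubling. For $x,y\in B=B(x_0,r)$ the triangle inequality gives $d(x,y)<2r$, while the inclusion $B(x,d(x,y))\subset B(x_0,3r)\subset B(x_0,4r)$ combined with~\eqref{e.doubling} yields $\mu(B(x,d(x,y)))\le c_D^2\mu(B)$. Multiplying and dividing the integrand by $d(x,y)^{st}\mu(B(x,d(x,y)))$ therefore shows
\[
\intav_B |u(x)-u(y)|^t\,dy \;\le\; (2r)^{st}\,c_D^2\, g_{u,s,t,B}(x)^t,
\]
and chaining with the first step gives the pointwise bound $|u(x)-u_B|^q\le C r^{sq}\,g_{u,s,t,B}(x)^q$ with $C=C(s,t,c_D)$.

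The last step is to integrate this pointwise estimate over $B$ against the probability measure $\mu/\mu(B)$, extract a $q$-th root, and apply Jensen one final time in the form $\bigl(\intav_B g^q\bigr)^{1/q}\le\bigl(\intav_B g^p\bigr)^{1/p}$, which is valid because $q\le p$. The result is exactly~\eqref{e.poinc}. There is no real obstacle: the main bookkeeping is tracking the three distinct roles of Jensen's inequality — the mean-value defect, promotion of the inner integrand from exponent $1$ to exponent $t$, and promotion of the outer average from exponent $q$ to exponent $p$ — and the only nontrivial geometric input is the doubling estimate on $\mu(B(x,d(x,y)))$.
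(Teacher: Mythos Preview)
Your proof is correct and follows essentially the same route as the paper's: three applications of Jensen's inequality (for the mean-value defect, to pass from exponent $1$ to $t$ in the inner average, and from $q$ to $p$ in the outer average) together with the doubling estimate $\mu(B(x,d(x,y)))\le c_D^2\mu(B)$ for $x,y\in B$. The only cosmetic difference is that you organize the argument as a pointwise bound followed by integration, whereas the paper carries the integrals through from the start.
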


\begin{proof}
Fix a ball $B=B(x_0,r)\subset X$ and a 
function $u\colon X\to \R$ that is integrable on balls.
Then
\begin{align*}
\vint_{B} \lvert u(x)-u_B\rvert^q\,dx
&\le \vint_{B}\intav_B \lvert u(x)-u(y)\rvert^q\,dy\,dx\\
&\le  \vint_{B}\biggl(\intav_B \lvert u(x)-u(y)\rvert^t\,dy\,\biggr)^{q/t}dx\\
&\le   \biggl(\vint_{B}\biggl(\intav_B \lvert u(x)-u(y)\rvert^t\,dy\,\biggr)^{p/t}dx\biggr)^{q/p}\\
&\le   r^{sq}\biggl(\vint_{B}\biggl(\int_B \frac{\lvert u(x)-u(y)\rvert^t}{r^{st}\mu(B)}\,dy\,\biggr)^{p/t}dx\biggr)^{q/p}\\
&\le   Cr^{sq}\biggl(\vint_{B}\biggl(\int_B \frac{\lvert u(x)-u(y)\rvert^t}{d(x,y)^{st}\mu(4B)}\,dy\,\biggr)^{p/t}dx\biggr)^{q/p}\\
&\le   Cr^{sq}\biggl(\vint_{B}\biggl(\int_B \frac{\lvert u(x)-u(y)\rvert^t}{d(x,y)^{st}\, \mu(B(x,d(x,y)))}\,dy\,\biggr)^{p/t}dx\biggr)^{q/p}.
\end{align*}
This yields the desired inequality
\eqref{e.poinc} with $\lambda=1$ and
$c_P=c_P(s,q,t,c_D)$.
\end{proof}

\section{Sobolev--Poincar\'e inequalities}\label{s.sobo}

As with the usual Poincar\'e inequalities (see~\cite{MR2867756,HeinonenKoskelaShanmugalingamTyson2015}), 
also in the fractional case it is
possible to improve inequalities from the case $q\le p$ (in Lemma~\ref{l.qp}) to the case $q>p$,
up to the ``Sobolev exponent'' $p^*=Qp/(Q-sp)$; see 
Theorem~\ref{t.qp_impro} below. 
For this purpose, we apply
a metric measure space version of the fractional truncation method in
\cite[Proposition~5]{DydaVahakangas2015}, \cite[Theorem~4.1]{DIV};  see also
\cite[Proposition~2.14]{MR4046971}. 
In the proof we need the following 
auxiliary result, which is a special case of \cite[Lemma~5]{MR1886617}.

\begin{lemma}\label{MeasureLem}
Assume  that $g\ge 0$ is a measurable function on a ball $B\subset X$ with
\[\mu(\{x\in B : g(x)=0\})\geq \mu(B)/2.\]
Then inequality
\[\mu (\{x\in B : g(x)>t\})\le 2\inf_{a\in\R}\mu(\{x\in B : \lvert g(x)-a\rvert>t/2\})
\]
holds  for every $t>0$.
\end{lemma}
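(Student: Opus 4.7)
The plan is an elementary measure-theoretic dichotomy based on the sign and size of the constant $a$. Write $E_t = \{x \in B : g(x) > t\}$ and $E_0 = \{x \in B : g(x) = 0\}$, so the hypothesis becomes $\mu(E_0) \ge \mu(B)/2$. Fix $t > 0$, and for an arbitrary $a \in \R$ I will show
\[
\mu(E_t) \le 2\,\mu\bigl(\{x\in B : \lvert g(x) - a\rvert > t/2\}\bigr);
\]
taking the infimum over $a \in \R$ then yields the claim.

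The argument splits into two cases. First, if $a \le t/2$, then for every $x \in E_t$ we have $g(x) - a > t - t/2 = t/2 \ge 0$, so $\lvert g(x) - a\rvert > t/2$. Hence $E_t \subset \{x \in B : \lvert g(x) - a\rvert > t/2\}$, and even the stronger inequality (without the factor $2$) holds.

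Second, if $a > t/2$, then since $g \ge 0$, every $x \in E_0$ satisfies $\lvert g(x) - a\rvert = a > t/2$, so $E_0 \subset \{x \in B : \lvert g(x) - a\rvert > t/2\}$. Combining this inclusion with the assumption $\mu(E_0) \ge \mu(B)/2$ and the trivial bound $\mu(E_t) \le \mu(B)$ gives
\[
\mu(E_t) \le \mu(B) \le 2\,\mu(E_0) \le 2\,\mu\bigl(\{x \in B : \lvert g(x) - a\rvert > t/2\}\bigr),
\]
as required. There is no real obstacle here; the only point worth noting is that because $g \ge 0$, the ``large zero set'' hypothesis is useful precisely when $a$ is positive and not too small, while for the remaining range of $a$ a direct pointwise comparison on $E_t$ already suffices.
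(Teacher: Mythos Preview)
Your proof is correct. The dichotomy on whether $a\le t/2$ or $a>t/2$ is exactly the right idea: in the first case $E_t\subset\{|g-a|>t/2\}$ by direct comparison, and in the second case $E_0\subset\{|g-a|>t/2\}$, so the hypothesis $\mu(E_0)\ge \mu(B)/2$ together with the trivial bound $\mu(E_t)\le\mu(B)$ finishes the estimate.

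Note that the paper does not actually prove this lemma; it simply quotes it as a special case of \cite[Lemma~5]{MR1886617}. Your argument is therefore a self-contained replacement for that citation, and it is as elementary as one could hope for.
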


Theorem~\ref{t.truncation} below is metric measure space version of
the Euclidean result in~\cite[Theorem~4.1]{DIV}. 
We will later apply this theorem
with the kernel 
\[K(y,z)=\frac{1}{d(y,z)^{sp}\mu(B(y,d(y,z)))},\qquad y,z\in X,\] 
but we formulate the result in terms of general kernels.
The proof is a straightforward adaptation of the proof in~\cite{DIV},
and it is based on a fractional Maz$'$ya truncation method. 

\begin{theorem}\label{t.truncation}
Let $0<s <1$, $0<p\le q<\infty$, 
and $\lambda\ge 1$. 
Let $K\colon X\times X\to [0,\infty]$ be a measurable function
and let $B=B(x_0,r)\subset X$ be a ball. 
Then the following conditions are equivalent: 
\begin{itemize}
\item[(A)]
There is a constant $C_1>0$ such that inequality
\begin{align*}
\inf_{a\in\R}\sup_{t>0} \mu ( & \{x\in B: |u(x)-a|>t\} ) t^{q}
\le C_1
\biggl(\int_{\lambda B}\int_{\lambda B} \vert u(y)-u(z)\vert^p K(y,z)
\,dz\,dy\biggr)^\frac{q}{p}
\end{align*}
holds for  every $u\in L^\infty(\lambda B)$.
\item[(B)]
There is a constant $C_2>0$ such that inequality
\begin{align*}
\inf_{a\in\R} \int_B\vert u(x)-a\vert ^{q}\,dx
\le C_2
\biggl(\int_{\lambda B}\int_{\lambda B}\vert u(y)-u(z)\vert^p K(y,z)
\,dz\,dy\biggr)^\frac{q}{p}
\end{align*}
holds for every $u\in L^1(\lambda B)$, and
the left-hand side is finite
if the right-hand side is finite.
\end{itemize}
Moreover, in 
the implication from {\rm (A)} to {\rm (B)} the constant $C_2$ is of the form $C(p,q)C_1$,
 and in the implication from {\rm (B)} to {\rm (A)} we have
$C_1=C_2$.
\end{theorem}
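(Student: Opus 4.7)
\emph{Proof strategy.} The implication (B) $\Rightarrow$ (A) is immediate from Chebyshev's inequality: for every $u \in L^\infty(\lambda B)$, $a \in \R$, and $t > 0$,
\begin{equation*}
\mu(\{x \in B : |u(x) - a| > t\}) \, t^q \le \int_B |u(x) - a|^q \, dx,
\end{equation*}
so (B) yields (A) with $C_1 = C_2$ upon taking the supremum in $t > 0$ and the infimum in $a \in \R$.

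For the converse, given $u \in L^1(\lambda B)$ the plan is first to choose $a \in \R$ to be a median of $u$ on $B$; replacing $u$ by $u - a$ (which leaves the right-hand side of (B) unchanged) and writing $|u - a|^q = ((u-a)^+)^q + ((u-a)^-)^q$ reduces matters to bounding $\int_B v^q \, dx$ for a nonnegative $v$ with $\mu(\{v = 0\} \cap B) \ge \mu(B)/2$. For each $k \in \Z$, introduce the dyadic Maz$'$ya truncation
\begin{equation*}
v_k := \min\{(v - 2^k)_+, 2^k\} \in L^\infty(X),
\end{equation*}
which still vanishes on at least half of $B$. Applying hypothesis (A) to $v_k$ produces some $a_k \in \R$ with $\mu(\{|v_k - a_k| > t\} \cap B) \, t^q \le C_1 I_k^{q/p}$ for every $t > 0$, where
\begin{equation*}
I_k := \int_{\lambda B} \int_{\lambda B} |v_k(y) - v_k(z)|^p \, K(y,z) \, dz \, dy.
\end{equation*}
Combining this estimate with Lemma~\ref{MeasureLem} applied to $v_k$ at the level $t = 2^{k-1}$, and observing $\{v_k > 2^{k-1}\} \cap B \supseteq \{v \ge 2^{k+1}\} \cap B$, yields the weak-type bound $2^{kq} \mu(\{v \ge 2^{k+1}\} \cap B) \le C(q) \, C_1 \, I_k^{q/p}$ at every dyadic level.

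To sum these estimates over $k \in \Z$, use $q/p \ge 1$ to pass $\sum_k I_k^{q/p} \le \bigl(\sum_k I_k\bigr)^{q/p}$ and Fubini to rewrite $\sum_k I_k$ as the integral of $\sum_k |v_k(y) - v_k(z)|^p$ against $K$. The main technical point---and the principal obstacle---is the pointwise truncation inequality
\begin{equation*}
\sum_{k \in \Z} |v_k(y) - v_k(z)|^p \le C(p) \, |v(y) - v(z)|^p, \qquad y, z \in X.
\end{equation*}
Assuming $0 \le v(z) \le v(y)$, only finitely many $k$ between $\lfloor \log_2 v(z) \rfloor$ and $\lfloor \log_2 v(y) \rfloor$ contribute, the nonnegative differences telescope to $v(y) - v(z)$, and a case-by-case bookkeeping in the dyadic scale shows that the $p$-th power sum is comparable to $|v(y) - v(z)|^p$ with a constant depending only on $p$. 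Combining everything with the layer-cake bound $\int_B v^q \, dx \le C(q) \sum_k 2^{kq} \mu(\{v \ge 2^{k+1}\} \cap B)$ and applying the argument to both $(u-a)^+$ and $(u-a)^-$ produces (B) with $C_2 = C(p, q) \, C_1$; the finiteness claim in (B) is then automatic from the resulting bound.
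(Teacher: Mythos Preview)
Your proposal is correct and follows essentially the same Maz$'$ya dyadic-truncation argument as the paper: median reduction to $v=(u-a)^\pm$, bounded truncations $v_k$, Lemma~\ref{MeasureLem} combined with hypothesis~(A) at each dyadic level, and summation via $\sum_k I_k^{q/p}\le(\sum_k I_k)^{q/p}$. One caution on your justification of the key pointwise bound $\sum_k|v_k(y)-v_k(z)|^p\le C(p)\,|v(y)-v(z)|^p$: the claim that ``only finitely many $k$ contribute'' fails when $v(z)=0$ (then $v_k(y)-v_k(z)=2^k$ for all $k$ below the top scale), and for $0<p<1$ the telescoping identity $\sum_k(v_k(y)-v_k(z))=v(y)-v(z)$ does not by itself bound the $\ell^p$-sum, since the number of nonzero terms can be arbitrarily large; what actually drives the inequality is the geometric decay $|v_k(y)-v_k(z)|\le 2^k$, and the paper makes this explicit by decomposing into level sets $A_j=\{2^{j-1}<v\le 2^j\}$, using $|v_{2^{k-1}}^{2^k}(y)-v_{2^{k-1}}^{2^k}(z)|\le 4\cdot 2^{k-j}|v(y)-v(z)|$ for $i\le k\le j$, and summing the geometric series in $k$---which is precisely the ``case-by-case bookkeeping'' you allude to.
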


\begin{proof}
The implication from (B) to (A) with $C_1=C_2$ follows from Chebyshev's inequality.
Let us then assume that condition (A) holds.
Fix $u\in L^1(\lambda B)$ and let $b\in\R$ be such that
\begin{equation}\label{e.apu}
\mu (\{x\in B : u(x)\geq b\}) \geq\frac{\mu(B)}{2}\quad\text{and}\quad\mu (\{x\in B : u(x)\leq b\})\geq\frac{\mu(B)}{2}.
\end{equation}
We write $v_{+}=\max\{u-b,0\}$ and  $v_{-}=-\min\{u-b,0\}$. In the sequel $v$ denotes either $v_{+}$ or $v_{-}$;
all the statements are valid in both cases. Moreover, without loss of generality,
we may assume that $v\ge 0$ is defined and finite everywhere in $\lambda B$.

For  $0<t_1<t_2<\infty$ and every $x\in \lambda B$, we define
\[
v_{t_1}^{t_2}(x)=
\begin{cases}
t_2-t_1, \qquad &\text{if } t_2\le v(x),\\
v(x)-t_1, &\text{if } t_1<v(x)<t_2,\\
0, &\text{if } v(x)\leq t_1.
\end{cases}
\]
Observe from \eqref{e.apu} that
\[
\mu (\{x\in B : v_{t_1}^{t_2}(x)=0\})\geq \mu(B)/2.\]
By Lemma \ref{MeasureLem} and condition  (A), both applied to the non-negative function $v_{t_1}^{t_2}\in L^\infty(\lambda B)$,
\begin{equation}\label{e.mas}
\begin{split}
\sup_{t>0} \mu (\{x\in B : v_{t_1}^{t_2}(x)>t\})
\,t^{q}&\le 2^{1+q}\inf_{a\in\R}\sup_{t>0}\mu (\{x\in B : \lvert v_{t_1}^{t_2}(x)-a\rvert >t\})\,t^q \\
&\le 2^{1+q}C_1 \biggl(\int_{\lambda B}\int_{\lambda B}\lvert v_{t_1}^{t_2}(y)-v_{t_1}^{t_2}(z)\rvert^p K(y,z)\,dz\,dy
\biggr)^\frac{q}{p}.
\end{split}
\end{equation}

We write
$E_k = \{x\in \lambda B  : v(x) > 2^{k} \}$ and $A_k = E_{k-1} \setminus E_{k}$, where $k\in \Z$.
Since $v\ge 0$ is finite everywhere in $B$,  we can write
\begin{equation}\label{e.decomp}
B= \{x\in B :  0\le v(x)<\infty\} = \Biggl(\bigcup_{i\in \Z}
B\cap A_i\Biggr) \cup \Bigl(B\cap \underbrace{\{x\in \lambda B  :  v(x)=0\}}_{=:A_{-\infty}})\Bigr).
\end{equation}
Hence, by inequality \eqref{e.mas} and the fact that $\sum_{k\in \Z} |a_k|^{q/p} \leq \bigl(\sum_{k\in \Z} |a_k|\bigr)^{q/p}$ for all real-valued sequences $(a_k)_{k\in\Z}$, we obtain
\begin{align*}
\int_B \lvert v(x)\rvert^q \,dx
&\le  \sum_{k\in \Z} 2^{(k+1)q} \mu(B\cap A_{k+1}) 
\le\sum_{k\in\Z}  2^{(k+1)q} \mu(\{x\in B : v_{2^{k-1}}^{2^{k}}(x)> 2^{k-2}\})\\&\le
2^{1+4q}C_1\Biggl(\sum_{k\in \Z} \int_{\lambda B}\int_{\lambda B} \vert v_{2^{k-1}}^{2^{k}}(y)-v_{2^{k-1}}^{2^{k}}(z)\vert^p K(y,z)\,dz\,dy\Biggr)^\frac{q}{p}.
\end{align*}
Using the definition of $v_{2^{k-1}}^{2^{k}}$, we can now estimate
\begin{equation}\label{e.remaining}
\begin{split}
\sum_{k\in \Z} & \int_{\lambda B}\int_{\lambda B} \vert v_{2^{k-1}}^{2^{k}}(y)-v_{2^{k-1}}^{2^{k}}(z)\vert^p K(y,z)\,dz\,dy\\
& \le \Biggl\{ \sum_{k\in \Z}  \sum_{ -\infty \le i\le k} \sum_{j\ge k} \int_{A_i}\int_{A_j}
+  \sum_{k\in \Z}   \sum_{i\ge k} \sum_{-\infty\le j\le k} \int_{A_i}\int_{A_j}
\Biggr\}
\vert v_{2^{k-1}}^{2^{k}}(y)-v_{2^{k-1}}^{2^{k}}(z)\vert^p K(y,z)\,dz\,dy.
\end{split}
\end{equation}
Let $y\in A_i$ and $z\in A_j$, where $j-1> i \ge -\infty$, and let $k\in\Z$.
Then 
\[
\lvert v(y)-v(z)\rvert \geq \lvert v(z)\rvert  - \lvert v(y)\rvert  \geq 2^{j-2}
\] 
and $\lvert v_{2^{k-1}}^{2^{k}}(y)-v_{2^{k-1}}^{2^{k}}(z)\rvert \le 2^k$,
and so \ed
\begin{equation}\label{summandEstimate}
\lvert v_{2^{k-1}}^{2^{k}}(y)-v_{2^{k-1}}^{2^{k}}(z)\rvert \le 4\cdot 2^{k-j}\lvert v(y)-v(z)\rvert.
\end{equation}
On the other hand,
the estimate
\[
\lvert v_{2^{k-1}}^{2^{k}}(y)-v_{2^{k-1}}^{2^{k}}(z)\rvert  \leq \lvert v(y)-v(z)\rvert
\]
holds for every $k\in\Z$, 
and thus we conclude that 
inequality \eqref{summandEstimate} holds whenever $-\infty\le i\le k\le j$
and $(y,z)\in A_i\times A_j$.

By inequality \eqref{summandEstimate}, we have
\begin{equation}\label{ThreeSums}
\begin{split}
\sum_{k\in \Z}\sum_{-\infty\le i\le k} &\sum_{j\ge k} \int_{A_i}\int_{A_j}\vert v_{2^{k-1}}^{2^{k}}(y)-v_{2^{k-1}}^{2^{k}}(z)\vert^p K(y,z)\,
dz\,dy\\
 &\le  4^p \sum_{k\in \Z} \sum_{-\infty\le i\leq k} \sum_{j\geq k} 2^{p(k-j)} \int_{A_i} \int_{A_j} \lvert v(y)-v(z)\rvert^pK(y,z)\,dz\,dy.
\end{split}
\end{equation}
Since $\sum_{k=i}^j 2^{p(k-j)}  \le (1-2^{-p})^{-1}$, changing the order of the summation shows
that the right-hand side of inequality \eqref{ThreeSums} is bounded by
\[
 \frac{4^p}{1-2^{-p}}
\int_{\lambda B} \int_{\lambda B} \lvert v(y)-v(z)\rvert^p K(y,z)\,dz\,dy.
\]
 The second sum on the right-hand side of \eqref{e.remaining} 
can be estimated in the same way. 
To conclude that (B) holds with $C_2=C(p,q)C_1$ it remains to recall that $\lvert u-b\rvert=v_{+}+v_{-}$ and $q>0$. Observe also that
$\lvert v_\pm(y)-v_\pm(z)\rvert \le \lvert u(y)-u(z)\rvert$ for all $y,z\in \lambda B$. 
\end{proof}

We also need certain maximal functions.
If $B\subset X$ is an (open) ball and $u\in L^1(B)$, then the noncentred maximal function restricted to
$B$ is
\[
M_B^* u(x)=\sup_{B'} \intav_{B'} \lvert u(y)\rvert\,dy,
\]
where the supremum is taken over all balls $B'\subset B$ containing $x\in B$.
We will apply the following lemma from \cite[Lemma~3.12]{MR2867756}.

\begin{lemma}\label{l.max}
Let $B\subset X$ be a ball and let $u\in L^1(B)$. Then $M_B^*u$ is lower
semicontinuous in $B$ and satisfies
\[
\mu(E_\tau)\le \frac{c_D^3}{\tau} \int_{E_\tau} \lvert u(x)\rvert\,dx\quad \text{ and } \quad
\lim_{\tau \to\infty} \tau \mu(E_\tau)=0,
\]
where $E_\tau=\{x\in B :  M^*_Bu(x)>\tau\}$ and $\tau>0$.
\end{lemma}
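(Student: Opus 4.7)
The plan is to prove the three claims in order: first lower semicontinuity of $M_B^* u$, then the weak-type bound, and finally the vanishing of $\tau \mu(E_\tau)$. For lower semicontinuity, I would fix $\tau > 0$ and a point $x_0 \in B$ with $M_B^* u(x_0) > \tau$. By the definition of the restricted noncentred maximal function, there exists an open ball $B' \subset B$ containing $x_0$ with $\intav_{B'} \lvert u\rvert\,d\mu > \tau$. For every $y \in B'$ the same ball $B'$ remains admissible in the supremum defining $M_B^* u(y)$, so $M_B^* u(y) > \tau$. Hence the superlevel set $E_\tau$ is open, giving lower semicontinuity.

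For the weak-type estimate, the strategy is a classical $5r$-covering argument combined with the doubling property. For each $x \in E_\tau$ I would select an open ball $B_x \subset B$ with $x \in B_x$ and $\tau \mu(B_x) < \int_{B_x} \lvert u\rvert\,d\mu$. Since $B_x \subset B$, the radius of $B_x$ is bounded by $\rad(B)$, so the basic $5r$-covering theorem in metric spaces yields a countable pairwise disjoint subfamily $\{B_i\}$ of $\{B_x\}_{x \in E_\tau}$ with $E_\tau \subset \bigcup_i 5 B_i$. Crucially each $B_i$ itself lies in $E_\tau$, since any $y \in B_i$ satisfies $M_B^* u(y) \ge \intav_{B_i} \lvert u\rvert\,d\mu > \tau$. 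Iterating the doubling condition three times gives $\mu(5B_i) \le c_D^3\, \mu(B_i)$ (since $5 \le 2^3$), and together with the disjointness of the $B_i$ this yields
\[
\mu(E_\tau) \le \sum_i \mu(5B_i) \le c_D^3 \sum_i \mu(B_i) \le \frac{c_D^3}{\tau} \sum_i \int_{B_i} \lvert u\rvert\,d\mu \le \frac{c_D^3}{\tau} \int_{E_\tau} \lvert u\rvert\,d\mu.
\]

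For the final assertion I would combine the just-proved weak-type bound with the absolute continuity of the integral. Enlarging the domain of integration on the right-hand side from $E_\tau$ to $B$ gives $\mu(E_\tau) \le c_D^3 \lVert u\rVert_{L^1(B)}/\tau$, so $\mu(E_\tau) \to 0$ as $\tau \to \infty$. Since $\lvert u\rvert \in L^1(B)$, absolute continuity of the Lebesgue integral then forces $\int_{E_\tau} \lvert u\rvert\,d\mu \to 0$, and the weak-type bound immediately delivers $\tau \mu(E_\tau) \le c_D^3 \int_{E_\tau} \lvert u\rvert\,d\mu \to 0$. No serious obstacle arises: the measurability of $E_\tau$ is automatic from the first step, and the $5r$-covering theorem requires only that the radii of the candidate balls be bounded above, which is immediate from $B_x \subset B$.
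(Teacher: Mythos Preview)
Your proof is correct and is essentially the standard argument for this result. Note, however, that the paper does not actually prove this lemma: it simply quotes it from \cite[Lemma~3.12]{MR2867756}. Your covering-plus-doubling argument for the weak-type bound, together with the absolute-continuity step for the limit, is precisely the proof one finds in that reference, so there is nothing to compare.
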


The next theorem  gives a sufficient condition for
the fractional $(s,q,p,p)$-Poincar\'e inequality
with $q=p^*=Qp/(Q-sp)$. 
The proof is essentially the same as the  
argument in \cite[pp.\ 95--97]{MR2867756},
but we present the details
for the sake of completeness.
In particular, the fractional Maz$'$ya truncation method is
needed with sufficiently careful tracking of the constants.
Recall that we assume throughout that $\mu$ is doubling, with constant $c_D\ge 1$ in~\eqref{e.doubling}.
Hence, for any fixed $1\le p<\infty$ and $0<s<1$ there exists
an exponent $Q>sp$ such that~\eqref{e.doubling_quant}
holds, and then $p^*=Qp/(Q-sp)>p$.
The exponent $Q$ in~\eqref{e.doubling_quant} is not uniquely determined, and
a smaller value of $Q>sp$ gives in Theorem~\ref{t.qp_impro} a larger exponent $p^*$, which 
in turn yields a stronger version of the Sobolev--Poincar\'e inequality.

\begin{theorem}\label{t.qp_impro}
Assume that $\mu$ is reverse doubling, with
constants $\sigma>0$ and $c_\sigma>0$ in~\eqref{e.reverse_doubling},
and 
let $Q>0$ and $c_Q>0$ be the constants in~\eqref{e.doubling_quant}.
Let $1\le p<\infty$ and $0<s<1$ be such that $sp<Q$,
and let $p^*=Qp/(Q-sp)$.
Then $X$ supports a $(s,p^*,p,p)$-Poincar\'e inequality,
with constants $\lambda=2$
and $c_P=c_P(Q,p,s,\sigma,c_D,c_Q,c_\sigma)$.
\end{theorem}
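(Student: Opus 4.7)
The plan is to invoke the fractional Maz$'$ya truncation method (Theorem~\ref{t.truncation}) to reduce the Sobolev--Poincar\'e inequality to a weak-type estimate, which in turn follows from a Hedberg-type pointwise bound combined with Lemma~\ref{l.max}.

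With $K(y,z)=d(y,z)^{-sp}/\mu(B(y,d(y,z)))$, I would apply Theorem~\ref{t.truncation} with $q=p^*$ and $\lambda=2$. This reduces the task to proving the weak-type condition: for every $u \in L^\infty(2B)$ and every $t>0$,
\[
t^{p^*}\mu(\{x\in B:|u(x)-u_B|>t\}) \le C r^{sp^*}\mu(B)\left(\intav_{2B} g^p\right)^{p^*/p},
\]
where $g=g_{u,s,p,2B}$. Once this is established, condition (B) of Theorem~\ref{t.truncation} yields a strong-type bound involving $\inf_a\int_B|u-a|^{p^*}$, and a routine triangle-inequality argument (using $|u_B-a|\le\intav_B|u-a|$) allows one to replace $\inf_a$ by $u_B$ and conclude the Sobolev--Poincar\'e inequality after dividing by $\mu(B)$ and applying doubling.

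To obtain the weak-type bound, I would fix a Lebesgue point $x\in B$ of $u$, set $r_i=2^{1-i}r$ and $B_i(x)=B(x,r_i)\subset 2B$, and use the telescoping
\[
u(x)-u_B = \sum_{i=0}^\infty\bigl(u_{B_{i+1}(x)}-u_{B_i(x)}\bigr) + \bigl(u_{B_0(x)}-u_B\bigr),
\]
together with Lemma~\ref{l.qp} applied with $q=1$ and $t=p$ on each $B_i(x)$ (using the monotonicity $g_{u,s,p,B_i(x)} \le g$) to derive
\[
|u(x)-u_B| \le C\sum_{i=0}^\infty r_i^s \Bigl(\intav_{B_i(x)} g^p\Bigr)^{1/p}.
\]
A Hedberg-type split at a scale $r_{i_0}$ then estimates the tail $i\ge i_0$ by the noncentred maximal function $M^*_{2B}(g^p)(x)$ and the head $i<i_0$ by the global average via~\eqref{e.doubling_quant}, which gives $\mu(B_i(x)) \ge c_Q(r_i/r)^Q\mu(B)$. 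The geometric series involved converge because $sp<Q$, and optimizing the cut-off $r_{i_0}$ yields the pointwise bound
\[
|u(x)-u_B| \le C r^s \Bigl(\intav_{2B} g^p\Bigr)^{s/Q}\Bigl(M^*_{2B}(g^p)(x)\Bigr)^{(Q-sp)/(Qp)}.
\]

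Raising this to the $p^*$-th power, the exponent of the maximal function becomes exactly $(Q-sp)p^*/(Qp)=1$, so Chebyshev's inequality combined with the weak $(1,1)$ bound from Lemma~\ref{l.max} converts the pointwise inequality into the desired weak-type estimate, with the identity $sp^*/Q+1=p^*/p$ ensuring the correct exponent on $\intav_{2B}g^p$. The main obstacle I anticipate is the precise bookkeeping of constants and exponents through the Hedberg optimization and the truncation step so that the measure factor $\mu(B)$ and the power $r^{sp^*}$ land in the right places; the reverse doubling hypothesis~\eqref{e.rev_dbl_decay}, through its quantitative form~\eqref{e.reverse_doubling}, is used alongside the doubling hypothesis to make the relevant nested-ball comparisons sharp enough for the estimate to close with constants depending only on the stated data.
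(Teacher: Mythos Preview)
Your approach is essentially the paper's: reduce via Theorem~\ref{t.truncation} to a weak-type estimate, establish a Hedberg-type pointwise bound through the telescoping chain with the $(s,1,p,p)$-Poincar\'e inequality of Lemma~\ref{l.qp}, and convert to weak type using Lemma~\ref{l.max}. Two remarks. First, a small indexing slip: with $r_i=2^{1-i}r$ the ball $B_0(x)=B(x,2r)$ is only contained in $3B$, not $2B$, so your assertion $B_i(x)\subset 2B$ fails at $i=0$; the paper sidesteps this by taking $B_0=2B$ itself (not centered at $x$) and $B_j=B(x,2^{-j}r)$ for $j\ge 1$. Second, and more interestingly, you split the Hedberg sum at a \emph{radius} $r_{i_0}$, whereas the paper splits at a \emph{measure} threshold $\mu(B_{j_0})$ determined by a two-sided condition, and it is precisely there that the paper invokes the quantitative reverse doubling~\eqref{e.reverse_doubling}: both to guarantee that $j_0$ exists (via $\mu(B_j)\to 0$) and to compare $\mu(B_j)$ with $\mu(B_{j_0})$ in the two partial sums. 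Your radius-based split needs only~\eqref{e.doubling_quant} for the head (the series converges since $sp<Q$) and the trivial geometric tail $\sum_{i\ge i_0} r_i^s$, so your closing sentence about reverse doubling is misplaced---your outline does not actually use it, which is a mild simplification over the paper's argument.
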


\begin{proof}
Let $B=B(x_0,r)$ be a ball in $X$ and let $u\in L^\infty(2B)$.
It suffices to prove that there exists a constant $C=C(Q,p,s,\sigma,c_D,c_Q,c_\sigma)$ such that
\begin{equation}\label{e.mazya_goal}
\begin{split}
\mu(\{x\in B :  \lvert u(x)-u_{2B}\rvert > t\})\, t^{p^*}
\le Cr^{sp^*}\mu(B)^{1-p^*/p}\biggl( \int_{2B} g_{u,s,p,2B}(y)^p
\,dy\biggr)^{\frac{p^*}{p}}
\end{split}
\end{equation}
whenever $t>0$. Then the 
$(s,p^*,p,p)$-Poincar\'e inequality follows from 
Theorem~\ref{t.truncation},  applied 
with the kernel 
\[
K(y,z)=\frac{1}{d(y,z)^{sp}\mu(B(y,d(y,z)))},\qquad y,z\in X,
\]
together with the doubling property of $\mu$
and the inequality 
\[
\int_B \lvert u(x)-u_B\rvert^p\,dx \leq 2^p \inf_{a\in\R} \int_B\vert u(x)-a\vert ^{p}\,dx,
\]
which in turn follows from H\"older's  inequality.

We prove \eqref{e.mazya_goal} for a fixed $t>0$. We may assume that 
$r< 2\diam(X)$ and 
\begin{equation}\label{e.integral}
0<\int_{2B} g_{u,s,p,2B}(y)^p\,dy<\infty.
\end{equation}
Indeed, if the integral in \eqref{e.integral} vanishes, then $u$ is a constant almost everywhere 
in the ball  $B$ by the $(s,p,p,p)$-Poincar\'e inequality
given in Lemma~\ref{l.qp}. 
Write $B_0=2B$, $r_0=2r$ and $M=M_{B_0}^*((g_{u,s,p,2B})^p)$.
By \cite[Lemma~1.8]{MR1800917}, $\mu$-almost every point $x\in B$ is a Lebesgue point of $u$.
Lemma \ref{l.max} implies that the function $M$ is finite
$\mu$-almost everywhere in $B$.

Let $x\in B$ be a Lebesgue point of $u$, with $M(x)<\infty$, and 
write $r_j=2^{-j}r$ and $B_j=B(x,r_j)$, for $j=1,2,\ldots$.
By the doubling property of $\mu$ and the $(s,1,p,p)$-Poincar\'e inequality in Lemma~\ref{l.qp},
\begin{align*}
\lvert u(x)-u_{B_0}\rvert&=\lim_{k\to \infty} \lvert u_{B_k}-u_{B_0}\rvert=\lim_{k\to\infty} \left\lvert \sum_{j=0}^{k-1} (u_{B_{j+1}}-u_{B_j})\right\rvert\\
&\le \sum_{j=0}^\infty \intav_{B_{j+1}} \lvert u(y)-u_{B_j}\rvert\, dy
\le c_D^3\sum_{j=0}^\infty \intav_{B_{j}} \lvert u(y)-u_{B_j}\rvert\, dy\\
&\le C(p,s,c_D)\sum_{j=0}^\infty  r_j^{s} \biggl(\intav_{B_j} 
g_{u,s,p,B_j}(y)^p
\,dy\biggr)^{\frac{1}{p}}
\\&\le C(p,s,c_D)\sum_{j=0}^\infty  r_j^{s} \biggl(
\intav_{B_j} g_{u,s,p,2B}(y)^p
\,dy
\biggr)^{\frac{1}{p}}.
\end{align*}
Condition \eqref{e.doubling_quant}, applied to the balls $B_j\subset B_0$  on the right-hand side, gives
\begin{equation}\label{eq.sigmat}
\lvert u(x)-u_{B_0}\rvert \le  C(Q,p,s,c_D,c_Q) \frac{r^s}{\mu(B_0)^{s/Q}}\underbrace{\sum_{j=0}^\infty  \mu(B_j)^{s/Q-1/p} 
\biggl(\int_{B_j} g_{u,s,p,2B}(y)^p
\,dy\biggr)^{\frac{1}{p}}}_{\Sigma'+\Sigma''}.
\end{equation}
We write the sum in~\eqref{eq.sigmat}  as $\Sigma'+\Sigma''$, where the summations are
over $0\le j<j_0$ and $j\ge j_0$, respectively, and the cut-off number $j_0\in \N$ 
is chosen as follows (depending on $x$).
Since $B_0\subset 8B_1$ and 
\[
0<\intav_{B_0}g_{u,s,p,2B} (y)^p\,dy\le M(x)<\infty,
\] 
there exists $j_0\ge 1$ such that
\begin{equation}\label{e.valid}
c_D^2\, \mu(B_{j_0}) \le \frac{1}{M(x)}  \int_{B_0} g_{u,s,p,2B}(y)^p\,dy\le c_D^3\, \mu(B_{j_0}).
\end{equation}
More precisely, by \eqref{e.reverse_doubling} $\mu(B_{j})\to 0$ as $j\to\infty$, and hence 
we can choose the largest integer $j_0$ for which the right inequality holds. The
left inequality then follows from the doubling property of~$\mu$. 


In the first sum $\Sigma'$ we 
have $\mu(B_j)\ge c_\sigma^{-1} 2^{\sigma(j_0-j)} \mu(B_{j_0})$ 
for every $0\le j<j_0$,
by~\eqref{e.reverse_doubling}.
Since $s/Q-1/p<0$, we obtain 
\begin{align*}
\Sigma'&=\sum_{j=0}^{j_0-1}  \mu(B_j)^{s/Q-1/p} \biggl(
\int_{B_j} g_{u,s,p,2B}(y)^p
\,dy
\biggr)^{\frac{1}{p}}\\
&\le C(Q,p,s,c_\sigma)\,  \mu(B_{j_0})^{s/Q-1/p} \biggl(
\int_{B_0} g_{u,s,p,2B}(y)^p
\,dy
\biggr)^{\frac{1}{p}} \sum_{j=0}^{j_0-1} 2^{\sigma(j_0-j)(s/Q-1/p)},\\ 
& \le C(Q,p,s,\sigma,c_D,c_\sigma)\, 
\biggl( \int_{B_0} g_{u,s,p,2B}(y)^p\,dy\biggr)^{\frac{s}{Q}} M(x)^{1/p-s/Q},
\end{align*}
where the sum on the second line is bounded from above by a constant 
$0<C(Q,p,s,\sigma)<\infty$ that can be chosen to be independent of $j_0$,
and the last step follows from the right-hand inequality in~\eqref{e.valid}.

Correspondingly,
in the second sum $\Sigma''$ 
we have $\mu(B_j)\le c_\sigma 2^{\sigma(j_0-j)} \mu(B_{j_0})$ for every $j\ge j_0$, by~\eqref{e.reverse_doubling}.
Using also the maximal function 
$M=M_{B_0}^* ((g_{u,s,p,2B})^p)$,
we obtain
\begin{align*}
\Sigma'' &=\sum_{j=j_0}^{\infty}  \mu(B_j)^{s/Q} \biggl(
\intav_{B_j} g_{u,s,p,2B}(y)^p \,dy\biggr)^{\frac{1}{p}}\\
& \le C(Q,s,c_\sigma) \mu(B_{j_0})^{s/Q}M(x)^{1/p} \sum_{j=j_0}^\infty 2^{\sigma(j_0-j)s/Q},\\
& \le C(Q,p,s,\sigma,c_D,c_\sigma)\, 
\biggl( \int_{B_0} g_{u,s,p,2B}(y)^p\,dy\biggr)^{\frac{s}{Q}} M(x)^{1/p-s/Q},
\end{align*}
where the last sum is bounded from above by a constant $0<C(Q,s,\sigma)<\infty$
and the final step follows from the left-hand inequality in~\eqref{e.valid}.

Substituting the above estimates for $\Sigma'$ and $\Sigma''$ to~\eqref{eq.sigmat} gives
\begin{align*}
\lvert u(x)-u_{B_0}\rvert &\le C(Q,p,s,c_D,c_Q) \frac{r^s}{\mu(B_0)^{s/Q}}(\Sigma'+\Sigma'')\\
&\le C(Q,p,s,\sigma,c_D,c_Q,c_\sigma)\, r^s \biggl( \intav_{B_0} g_{u,s,p,2B}(y)^p\,dy\biggr)^{\frac{s}{Q}} M(x)^{\frac{1}{p^*}},
\end{align*}
for $p^*=Qp/(Q-sp)$.
In particular, if  $\lvert u(x)-u_{B_0}\rvert > t>0$, then 
\begin{align*}
M(x)> C(Q,p,s,\sigma,c_D,c_Q, c_\sigma )\,\frac{t^{p^*}}{r^{sp^*}}\biggl( \intav_{B_0}g_{u,s,p,2B}(y)^p\,dy\biggr)^{-\frac{sp^*}{Q}}
=\tau(t)>0.
\end{align*}
From this estimate, which is valid for $\mu$-almost every $x\in B$, and Lemma \ref{l.max}, we obtain
\begin{align*}
\mu(\{x\in B :  \lvert u(x)-u_{B_0}\rvert > t\})\, t^{p^*}&\le \mu(\{x\in B_0 :  M(x) > \tau(t)\})\,t^{p^*}
\le \frac{c_D^3 t^{p^*}}{\tau(t)}\int_{B_0} g_{u,s,p,2B}(x)^p\,dx
\\& \le C(Q,p,s,\sigma,c_D,c_Q,c_\sigma)\,r^{sp^*}\mu(B)^{1-p^*/p}\biggl( \int_{B_0} g_{u,s,p,2B}(x)^p\,dx\biggr)^{\frac{p^*}{p}}
\end{align*}
for every $t>0$. Inequality \eqref{e.mazya_goal} follows, and the proof is complete.
\end{proof}

\section{Capacitary and boundary Poincar\'e inequalities}\label{s.capacitary}

Next we study versions of fractional Poincar\'e inequalities, in which the zero sets of
functions are taken into account. As a tool we will apply a variant  of the 
fractional relative capacity, 
compare to~\cite[Definition~7.1]{MR3605166} and see  also \cite{DydaVahakangas2015} and \cite[\S 11]{Mazya2011}.

\begin{definition}\label{d.capacity}
Let $0<s<1$, $1\le t,p<\infty$, and $\Lambda\ge 2$. 
Let $B\subset X$ be a ball and let $E\subset \iol{B}$ be a closed set. Then we write
\[
\Cp_{s,p,t} (E,2B,\Lambda B) = \inf_\varphi \int_{\Lambda B} \biggl( \int_{\Lambda B}\frac{|\varphi(x)-\varphi(y)|^t}{d(x,y)^{st}\mu(B(x,d(x,y)))}\,dy\biggr)^{p/t}\,dx
\]
where the infimum is taken over all continuous functions 
$\varphi\colon X\to \R$ that are integrable on balls, such that $\varphi(x)\ge 1$ for every $x\in E$ and 
$\varphi(x)=0$ for every $x\in X\setminus 2B$.
\end{definition}

 The following simple lemma is needed in the proof of Theorem~\ref{t.Mazya}.

\begin{lemma}\label{l.alpha}
Let $\alpha>0$.
There is a constant $C(\alpha,c_D)>0$ such that
\[
  r^{-\alpha}\int_{B(x,r)} \frac{d(x,y)^{\alpha}}{\mu(B(x,d(x,y)))} \, dy\leq C(\alpha,c_D)
\]
for every $x\in X$ and $r>0$.
\end{lemma}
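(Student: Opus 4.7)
The plan is to exploit the doubling property via a standard dyadic annular decomposition of the ball $B(x,r)$. Specifically, I would partition $B(x,r) \setminus \{x\}$ into the annuli
\[
A_k = \bigl\{ y \in X : 2^{-k-1} r \le d(x,y) < 2^{-k} r \bigr\}, \qquad k = 0, 1, 2, \ldots,
\]
so that $B(x,r) = \{x\} \cup \bigcup_{k\ge 0} A_k$ (and $\mu(\{x\})=0$ by assumption).

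On each annulus $A_k$ I would use three simple estimates: the pointwise bound $d(x,y)^\alpha \le (2^{-k} r)^\alpha$ in the numerator; the lower bound $\mu(B(x,d(x,y))) \ge \mu(B(x, 2^{-k-1} r))$ in the denominator (since $d(x,y) \ge 2^{-k-1} r$ there); and the upper bound $\mu(A_k) \le \mu(B(x, 2^{-k} r)) \le c_D\, \mu(B(x, 2^{-k-1} r))$, which follows from the doubling condition \eqref{e.doubling}. Combining these yields
\[
\int_{A_k} \frac{d(x,y)^\alpha}{\mu(B(x, d(x,y)))}\, dy \le (2^{-k} r)^\alpha \cdot \frac{c_D\, \mu(B(x, 2^{-k-1} r))}{\mu(B(x, 2^{-k-1} r))} = c_D\, (2^{-k} r)^\alpha.
\]

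Summing a geometric series in $k \ge 0$ (which converges precisely because $\alpha > 0$) gives
\[
\int_{B(x,r)} \frac{d(x,y)^\alpha}{\mu(B(x, d(x,y)))}\, dy \le c_D\, r^\alpha \sum_{k=0}^\infty 2^{-k\alpha} = \frac{c_D}{1 - 2^{-\alpha}}\, r^\alpha,
\]
and dividing by $r^\alpha$ produces the desired constant $C(\alpha, c_D) = c_D / (1 - 2^{-\alpha})$.

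There is no real obstacle here; the only thing to watch is that the positivity of $\alpha$ is essential for the geometric sum to converge (otherwise one would get a logarithmic divergence), and that the doubling constant enters linearly because the annulus measure is controlled by the inner ball measure rather than the outer one. Everything else is purely mechanical.
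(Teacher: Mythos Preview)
Your proof is correct and essentially identical to the paper's: the same dyadic annular decomposition of $B(x,r)$, the same three pointwise bounds on each annulus, and the same geometric series summation. You track the final constant $c_D/(1-2^{-\alpha})$ explicitly, whereas the paper leaves it as $C(\alpha,c_D)$, but otherwise there is no substantive difference.
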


\begin{proof}
Let $x\in X$ and $r>0$.
For each $j\in \{0,1,\ldots\}$ we write
\[A_j(x,r)=\{y\in X :  2^{-j-1}r\le d(x,y)<2^{-j}r\}.\]
By the doubling condition \eqref{e.doubling} of the measure $\mu$ and the standing assumption that $\mu(\{x\})=0$,
we obtain 
\begin{align*}
\int_{B(x,r)} \frac{d(x,y)^{\alpha}}{\mu(B(x,d(x,y)))} \, dy 
&= \sum_{j=0}^\infty \int_{A_j(x,r)} \frac{d(x,y)^{\alpha}}{\mu(B(x,d(x,y)))} \, dy \\
&\le \sum_{j=0}^\infty (2^{-j}r)^{\alpha}\frac{\mu(A_j(x,r))}{\mu(B(x,2^{-j-1}r))}\\
&\le \sum_{j=0}^\infty (2^{-j}r)^{\alpha}\frac{\mu(B(x,2^{-j}r))}{\mu(B(x,2^{-j-1}r))}\le C(\alpha,c_D)\, r^\alpha.\qedhere
\end{align*}
\end{proof}

The next result is a fractional version of Maz$'$ya's capacitary Poincar\'e inequality,
compare to~\cite[Theorem~6.21]{MR2867756}.
The argument is similar to that in~\cite{MR2867756}, but 
there are several technical differences due to  
the present non-local  setting.

\begin{theorem}\label{t.Mazya}
Let $q\geq p \geq 1$, $0<s<1$, $1\leq t < \infty$ and $\Lambda\ge 2$. 
Assume that $X$ supports a $(s,q,p,t)$-Poincar\'e inequality
with constants $c_P>0$ and $\lambda\ge 1$.
Let $u\colon X\to \R $ be a continuous function and let
\[
Z=\{x\in X : u(x)=0 \}.
\] 
Then, for all balls $B=B(x_0,r)\subset X$,
\begin{equation}\label{e.fo}\begin{split}
\biggl( \intav_{\Lambda B} |u(x)|^q\, dx  \biggr)^{p/q}
\le \frac{C(s,t,p,c_D,c_P,\Lambda)}{\Cp_{s,p,t}(\iol{B}\cap Z, 2B,\Lambda B)}
  \int_{\lambda\Lambda B} g_{u,s,t,\lambda\Lambda B}(x)^p \,dx.
  \end{split}
\end{equation}
\end{theorem}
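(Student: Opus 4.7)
The plan is to adapt the classical Maz$'$ya capacitary Poincar\'e argument (cf.~\cite[Theorem~6.21]{MR2867756}) to the present non-local fractional setting, combining the assumed $(s,q,p,t)$-Poincar\'e inequality on $\Lambda B$ with a capacitary upper bound for $|u_{\Lambda B}|^p$.

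First I would use the triangle inequality
\[
\biggl(\intav_{\Lambda B}|u|^q\biggr)^{1/q} \le \biggl(\intav_{\Lambda B}|u - u_{\Lambda B}|^q\biggr)^{1/q} + |u_{\Lambda B}|,
\]
invoke the $(s,q,p,t)$-Poincar\'e inequality on $\Lambda B$ to control the first term, and raise to the $p$-th power. Using the trivial bound $\Cp_{s,p,t}(\iol{B}\cap Z,2B,\Lambda B) \le C\mu(\Lambda B)/r^{sp}$ (from admissibility of the cutoff $\eta$ introduced below), doubling gives $r^{sp}/\mu(\lambda\Lambda B) \le C/\Cp$, so the Poincar\'e contribution is already of the desired form $(C/\Cp)\int g^p$. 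The task then reduces to showing $|u_{\Lambda B}|^p \le (C/\Cp)\int g^p$.

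Next, assume $a := |u_{\Lambda B}| > 0$ (else done) and construct a test function for $\Cp_{s,p,t}(\iol{B}\cap Z,2B,\Lambda B)$. Take
\[
\eta(x) = \min\bigl\{1,\max\{0,\,2 - d(x,x_0)/r\}\bigr\},
\]
a continuous $(1/r)$-Lipschitz cutoff with $\eta = 1$ on $\iol{B}$ and $\eta = 0$ on $X\setminus 2B$, set $w = \max\{0,\, 1 - |u|/a\}$, and take $\varphi = \eta w$. Since $u \equiv 0$ on $E := \iol{B}\cap Z \subset \iol{B}$, one checks that $\varphi = 1$ on $E$ and $\varphi = 0$ outside $2B$, so $\varphi$ is admissible. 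The Leibniz-type bound
\[
|\varphi(x) - \varphi(y)| \le |w(x)-w(y)| + |\eta(x)-\eta(y)| \le \frac{|u(x)-u(y)|}{a} + \frac{d(x,y)}{r},
\]
together with Lemma~\ref{l.alpha} applied with $\alpha = (1-s)t$ to obtain $g_{\eta,s,t,\Lambda B}(x) \le C(s,t,c_D,\Lambda)\, r^{-s}$ on $\Lambda B$, then gives after integration
\[
a^p\,\Cp_{s,p,t}(E,2B,\Lambda B) \;\le\; C_1\int_{\Lambda B} g_{u,s,t,\Lambda B}^p \;+\; C_2\, a^p\,\frac{\mu(\Lambda B)}{r^{sp}}.
\]

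The main obstacle is closing this estimate in the presence of the circular $a^p$ on the right: this Leibniz loss is inherent in the fractional $g$-functional and cannot be eliminated by refining $\eta$ alone. I would resolve it by a dichotomy on the size of $\Cp$ relative to its trivial upper bound $\mu(\Lambda B)/r^{sp}$. When $\Cp \ge 2C_2\,\mu(\Lambda B)/r^{sp}$, the displayed inequality absorbs the $a^p$ term on the right, yielding $a^p \le (2C_1/\Cp)\int g^p$ directly. In the complementary small-capacity regime $\Cp < 2C_2\,\mu(\Lambda B)/r^{sp}$, additional structure is required: either the $L^1$-variant of Poincar\'e together with the identity $a\,\mu(E) = \int_E |u - u_{\Lambda B}|$ (leveraging $u\equiv 0$ on $E$), a refined test function in the capacity bound, or a truncation-type argument in the spirit of Theorem~\ref{t.truncation}. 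Tracking the constants carefully through these regimes so that they match the stated dependence $C(s,t,p,c_D,c_P,\Lambda)$ is the delicate bookkeeping step of the argument.
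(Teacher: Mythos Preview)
Your approach has a genuine gap: the normalization $a = |u_{\Lambda B}|$ in the test function is the wrong choice, and the resulting circular term $C_2\, a^p\,\mu(\Lambda B)/r^{sp}$ cannot be handled by the dichotomy you propose. In the small-capacity regime $\Cp < 2C_2\,\mu(\Lambda B)/r^{sp}$ you offer only vague suggestions; the identity $a\,\mu(E) = \int_E |u - u_{\Lambda B}|$ is useless when $\mu(E)=0$ (which is generic for fractional problems), and truncation in the style of Theorem~\ref{t.truncation} does not address the issue either. More fundamentally, in your scheme the capacity appears only through its trivial upper bound $C\mu(\Lambda B)/r^{sp}$ when controlling the Poincar\'e piece, so nothing in the argument forces the capacity to appear in the denominator in a non-trivial way.

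The paper avoids the circularity entirely by normalizing with
\[
\overline{u} = \biggl(\intav_{\Lambda B} |u|^q\,dx\biggr)^{1/q}
\]
instead of $|u_{\Lambda B}|$, taking (after reducing to $u\ge 0$ bounded) the test function $\varphi = (1 - u/\overline{u})\eta$. The Leibniz split then produces the term $r^{-sp}\int_{\Lambda B} |\overline{u} - u(x)|^p\,dx$, not $a^p\,\mu(\Lambda B)/r^{sp}$. Via H\"older (using $q\ge p$) and the reverse triangle inequality for the $L^q$ norm,
\[
|\overline{u} - u_{\Lambda B}|\,\mu(\Lambda B)^{1/q}
= \bigl|\,\lVert u\rVert_{L^q(\Lambda B)} - \lVert u_{\Lambda B}\rVert_{L^q(\Lambda B)}\,\bigr|
\le \lVert u - u_{\Lambda B}\rVert_{L^q(\Lambda B)},
\]
so $\int_{\Lambda B} |\overline{u} - u|^p$ is controlled directly by $\lVert u - u_{\Lambda B}\rVert_{L^q(\Lambda B)}^p$, which the assumed $(s,q,p,t)$-Poincar\'e inequality bounds by $C r^{sp}\int_{\lambda\Lambda B} g_{u,s,t,\lambda\Lambda B}^p$. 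Thus the Leibniz loss term is absorbed into $C\int g^p$ with no circularity, giving $\overline{u}^p\,\Cp \le C\int g^p$ in one stroke; no preliminary splitting of $\lVert u\rVert_{L^q}$ and no dichotomy are needed.
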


\begin{proof}
By replacing $u$ with $u_k=\min\{\lvert u\rvert,k\}$,  for $k\in\N$, applying Fatou's lemma, 
and using inequalities $g_{u_k,s,t,\lambda\Lambda B}\le g_{u,s,t,\lambda\Lambda B}$, we may assume that $u\geq 0$
and that $u$ is bounded.
Fix a ball $B=B(x_0,r)$ in $X$.
Without loss of generality we may assume that 
the right-hand side of inequality \eqref{e.fo} is finite. Let
\[
  \overline{u} = \biggl( \intav_{\Lambda B} |u(x)|^q\, dx \biggr)^{1/q}<\infty.
\]
We may assume that $\overline{u}>0$, as otherwise there is nothing to prove. 
  
Let $\eta(x)=\max\{0,1-\dist(x,B)/r\}$ for every $x\in X$. Then 
\[|\eta(x)-\eta(y)|\leq d(x,y)/r,\qquad \text{ for every }x,y\in X, \] 
$0\leq \eta \leq 1$ in $X$, $\eta=1$ in $\iol{B}$ and $\eta=0$ outside $2B$. The function 
$\varphi=(1-u/\overline{u}) \eta$ is  bounded
and continuous,  
$\varphi=1$ in $\iol{B}\cap Z$,
and $\varphi=0$ outside $2B$.
By Definition~\ref{d.capacity} of the capacity,   we have 
\[\begin{split}
  \Cp_{s,p,t}(\iol{B}\cap Z, 2B,\Lambda B) &\leq \int_{\Lambda B} \biggl( \int_{\Lambda B} \frac{|\varphi(x)-\varphi(y)|^t}{d(x,y)^{st}\mu(B(x,d(x,y)))}\,dy\biggr)^{p/t}\,dx \\
  &=  \frac{1}{\overline{u}^p}\int_{\Lambda B} \biggl( \int_{\Lambda B} \frac{|\eta(x)(\overline{u}-u(x)) -\eta(y)(\overline{u}-u(y))|^t}{d(x,y)^{st}\mu(B(x,d(x,y)))} \,dy \biggr)^{p/t} \!\!\! dx\\
  &=  \frac{1}{\overline{u}^p} I.
\end{split}\]
To estimate $I$, we write
\[\begin{split}
    I &= \int_{\Lambda B} \biggl( \int_{\Lambda B}
    \frac{|\eta(x)(\overline{u}-u(x)) -\eta(y)(\overline{u}-u(x)) + \eta(y)(\overline{u}-u(x)) - \eta(y)(\overline{u}-u(y))|^t}{d(x,y)^{st}\mu(B(x,d(x,y)))}\,dy\biggr)^{p/t} dx \\
    &\leq C(t,p) \int_{\Lambda B} |\overline{u}-u(x)|^p \biggl( \int_{\Lambda B} \frac{|\eta(x)-\eta(y)|^t}{d(x,y)^{st}\mu(B(x,d(x,y)))}\,dy\biggr)^{p/t} dx \\&\qquad\qquad\qquad\qquad\qquad\qquad +
     C(t,p) \int_{\Lambda B}\biggl( \int_{\Lambda B} \eta(y)^t \frac{|u(y)-u(x)|^t}{d(x,y)^{st}\mu(B(x,d(x,y)))}\,dy \biggr)^{p/t} dx.
\end{split}\]
  Fix $x\in \Lambda B$. Since $|\eta(x)-\eta(y)| \leq  d(x,y)/r$
  for each $y\in \Lambda B$, by Lemma \ref{l.alpha} we have
  \[
    \int_{\Lambda B} \frac{|\eta(x)-\eta(y)|^t}{d(x,y)^{st}\mu(B(x,d(x,y)))}\,dy \leq
   r^{-t} \int_{B(x,2\Lambda r)} \frac{d(x,y)^{t(1-s)}}{\mu(B(x,d(x,y)))}\,dy \leq C(s,t,c_D,\Lambda) r^{-st}.
  \]
  Taking also into account that $0\le \eta^t\le 1$ in $\Lambda B$, we obtain
  \[
    I \leq C(s,t,p,c_D,\Lambda)r^{-sp} \int_{\Lambda B} |\overline{u}-u(x)|^p\,dx
    + C(t,p) \int_{\Lambda B}g_{u,s,t,\Lambda B}(x)^p\, dx.
  \]
Hence, we are left with estimating the following integral, with $a=(q-p)/(pq)$,
\[\begin{split}
    \biggl(\int_{\Lambda B} |\overline{u}-u(x)|^p\,dx \biggr)^{1/p} &\leq 
    \mu(\Lambda B)^{a}\biggl(\int_{\Lambda B} |\overline{u}-u(x)|^q\,dx \biggr)^{1/q}\\&\le
      \mu(\Lambda B)^{a}  \biggl(\int_{\Lambda B} |u(x)-u_{\Lambda B}|^q\,dx \biggr)^{1/q} + |\overline{u}-u_{\Lambda B}| \mu(\Lambda B)^{a+1/q}.
\end{split}\]
 The  first step above relies on the assumption $q\ge p$. The right-hand side  can be estimated exactly as 
in \cite[pp.~144--145]{MR2867756}.   
Indeed, the second term may be estimated by the first one, since 
\begin{align*}
\lvert \overline{u}-u_{\Lambda B}\rvert\mu(\Lambda B)^{a+1/q}
&=\mu(\Lambda B)^{a}\big\lvert  \lVert u\rVert_{L^q(\Lambda B)} - \lVert u_{\Lambda B}\rVert_{L^q(\Lambda B)}  \big\rvert\\
&\le \mu(\Lambda B)^{a}\lVert u-u_{\Lambda B}\rVert_{L^q(\Lambda B)}=\mu(\Lambda B)^{a}\biggl(\int_{\Lambda B} \lvert u(x)-u_{\Lambda B}\rvert^q\,dx\biggr)^{1/q}.
\end{align*}
The first term is in turn estimated by the assumed $(s,q,p,t)$-Poincar\'e inequality, 
\begin{align*}
\mu(\Lambda B)^{a}\biggl(\int_{\Lambda B} \lvert u(x)-u_{\Lambda B}\rvert^q\,dx\biggr)^{1/q}
&=\mu(\Lambda B)^{1/p}\biggl(\intav_{\Lambda B} \lvert u(x)-u_{\Lambda B}\rvert^q\,dx\biggr)^{1/q} \\
& \le c_P r^s \biggl( \int_{\lambda\Lambda B}g_{u,s,t,\lambda\Lambda B}(x)^p\,dx\biggr)^{1/p}.
\end{align*}   
This results in
\begin{align*}
I 
&\le C(s,t,p,c_D,c_P,\Lambda) \int_{\lambda\Lambda B} g_{u,s,t,\lambda\Lambda B}(x)^p\,dx,
\end{align*}
and it follows that
\begin{align*}
\biggl( \intav_{\Lambda B} |u(x)|^q dx \biggr)^{p/q}=\overline{u}^p
\le \frac{C(s,t,p,c_D,c_P,\Lambda)}{\Cp_{s,p,t}(\iol{B}\cap Z,2B,\Lambda B)}
\int_{\lambda\Lambda B}g_{u,s,t,\lambda\Lambda B}(x)^p\,dx,
\end{align*}
as required.
\end{proof}

Next we consider two notions that are closely related to the relative capacity
but have a more geometric flavor.  
The following concept of (co)dimension was introduced in~\cite{KaenmakiLehrbackVuorinen2013}. 

\begin{definition}\label{def.ucodima}
Let $E\subset X$. For $r>0$, the
open $r$-neighborhood of $E$
is the set 
\[E_r=\{x\in X : \dist(x,E)<r\}.\]
The upper Assouad codimension of $E$,
denoted by $\ucodima(E)$, is
the infimum of all $Q\ge 0$ for
which there is a constant $c>0$ such that
\[
\frac{\mu(E_r\cap B(x,R))}{\mu(B(x,R))}\ge c\Bigl(\frac{r}{R}\Bigr)^Q
\]
for every $x\in E$ and all $0<r<R<\diam(E)$. 
If $E$ consists of one point, then
the restriction $R<\diam(E)$ is removed.
\end{definition}

If the measure $\mu$ is $Q$-regular, then
$\ucodima(E)=Q-\ldima(E)$
for all $E\subset X$, where $\ldima(E)$ is the
lower (Assouad) dimension of $E$;
see~\cite[(3.11)]{KaenmakiLehrbackVuorinen2013}.
In the Euclidean space $\R^n$, which is regular with $Q=n$, the connection between fractional Hardy inequalities and
the lower Assouad dimension (as well as its dual, the upper Assouad dimension)
has been considered in~\cite{DydaKijaczko, DydaVahakangas2014};
see also~\cite{lehrbackHardyAssouad}.

We also need suitable versions of Hausdorff contents,
which give lower bounds for capacities, as in Lemma~\ref{l.codim} below.
In the case of non-fractional capacities, similar
ideas can be found for instance in~\cite[Theorem~5.9]{MR1654771}
and in several subsequent papers. 


\begin{definition}\label{def.Hcont}
The ($\rho$-restricted) Hausdorff content of codimension $\eta\ge 0$
is defined for sets $E\subset X$ by setting 
\[
\Ha^{\mu,\eta}_\rho(E)=\inf\Biggl\{\sum_{k} \mu(B(x_k,r_k))\,r_k^{-\eta} :
E\subset\bigcup_{k} B(x_k,r_k)\text{ and } 0<r_k\leq \rho \Biggr\}.
\]
\end{definition}

\begin{lemma}\label{l.codim}
Let $0<s<1$,  $1\le p,t<\infty$, $0\le \eta<p$ and $\Lambda> 2$.
Assume that $\mu$ is 
reverse doubling, with constants $\kappa=2/\Lambda$
and $0<c_R<1$ in~\eqref{e.rev_dbl_decay}. \ed
Let $B=B(x_0,r)\subset X$ be a ball with $r<\diam(X)/(2\Lambda)$,
and assume that $E\subset \iol{B}$ is a closed set.
Then
\[
\mathcal{H}^{\mu,s\eta}_{5\Lambda r}(E) \le 
C(s,t,p,\eta,c_R,c_D,\Lambda) r^{s(p-\eta)}\Cp_{s,p,t}(E,2B,\Lambda B).
\]
\end{lemma}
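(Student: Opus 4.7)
The plan is to take an admissible test function $\varphi$ for the capacity on the right-hand side, truncate it so that $0\le \varphi\le 1$ with $\varphi=1$ on $E$ and $\varphi=0$ outside $2B$ (this does not increase the capacity integral), and extract from its level-set structure a covering of $E$ that witnesses the Hausdorff content. For each $x\in E$, define the critical scale
\[
r_x=\inf\bigl\{\rho>0:\mu\bigl(B(x,\rho)\cap\{\varphi\le 1/2\}\bigr)\ge \tfrac{1}{2}\mu(B(x,\rho))\bigr\}.
\]
Continuity of $\varphi$ with $\varphi(x)=1$ forces $r_x>0$, while iterating the reverse doubling hypothesis at $\kappa=2/\Lambda$ a bounded number of times (with the number of iterations depending only on $c_R,c_D,\Lambda$) makes $\mu(2B)$ strictly less than $\tfrac{1}{2}\mu(B(x,\rho))$ at some scale $\rho\le\Lambda r$; since $\varphi=0$ on $X\setminus 2B$, this forces $r_x\le\Lambda r$.

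The critical scale produces a pointwise lower bound on the fractional gradient: on $B(x,r_x)\cap\{\varphi\le 1/2\}$ (at least half of $B(x,r_x)$ by measure), we have $|\varphi(x)-\varphi(y)|\ge 1/2$, $d(x,y)\le r_x$, and $\mu(B(x,d(x,y)))\le\mu(B(x,r_x))$. Bounding the integrand of $g_{\varphi,s,t,\Lambda B}(x)^t$ directly on this set yields
\[
g_{\varphi,s,t,\Lambda B}(x)\ge C\,r_x^{-s}\qquad\text{for every } x\in E.
\]

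Apply the Vitali $5r$-covering lemma to $\{B(x,r_x):x\in E\}$, which has uniformly bounded radii, to extract a pairwise disjoint subfamily $\{B_k=B(x_k,r_k)\}_k$ with $\bigcup_k 5B_k\supset E$. Since $r_k\le\Lambda r$, the balls $5B_k$ have radii $\le 5\Lambda r$ and are admissible for $\mathcal{H}^{\mu,s\eta}_{5\Lambda r}(E)$. Using the $(s,1,p,t)$-Poincar\'e inequality from Lemma~\ref{l.qp} applied on $B_k$, together with the two-sided information at the critical scale ($\varphi_{B_k}\le 3/4$ by the half-measure condition; $\varphi(x_k)=1$; and by minimality of $r_k$ the concentric half-scale ball $B(x_k,r_k/2)$ has at least half of its measure in $\{\varphi>1/2\}$), one extracts the local estimate
\[
\mu(B_k)\le C\,r_k^{sp}\int_{B_k}g^p_{\varphi,s,t,\Lambda B}\,dy.
\]
Summing over $k$ using disjointness, doubling ($\mu(5B_k)\le C\mu(B_k)$), and the elementary inequality $r_k^{s(p-\eta)}\le(\Lambda r)^{s(p-\eta)}\le C\,r^{s(p-\eta)}$ (which needs $\eta<p$) then gives
\[
\mathcal{H}^{\mu,s\eta}_{5\Lambda r}(E)\le C\sum_k\mu(B_k)\,r_k^{-s\eta}\le C\,r^{s(p-\eta)}\sum_k\int_{B_k}g^p\le C\,r^{s(p-\eta)}\,\Cp_{s,p,t}(E,2B,\Lambda B),
\]
where the last step uses the disjointness of the $B_k$ and their containment in $\Lambda B$, together with the definition of the capacity as an infimum over admissible $\varphi$.

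The main obstacle is the local estimate $\mu(B_k)\le C\,r_k^{sp}\int_{B_k}g^p$. A naive Poincar\'e-plus-oscillation argument using $\varphi=1$ on $E\cap B_k$ and $\varphi_{B_k}\le 3/4$ only produces $\mu(E\cap B_k)$ on the left-hand side, which is too weak when $E$ is thin relative to $B_k$. Upgrading the bound from $\mu(E\cap B_k)$ to $\mu(B_k)$ requires exploiting the minimality of $r_k$ on scales smaller than $r_k$: this yields a second substantial subset of $B_k$ — contained in $B(x_k,r_k/2)\cap\{\varphi>1/2\}$, of measure comparable to $\mu(B_k)$ by doubling — on which $\varphi$ differs from the values on $B_k\cap\{\varphi\le 1/2\}$ in an $L^p$ sense controllable by the fractional energy, so that the full mass $\mu(B_k)$ can be recovered from the Poincar\'e oscillation estimate rather than just $\mu(E\cap B_k)$.
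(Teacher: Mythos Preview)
Your overall architecture matches the paper's: take an admissible $\varphi$, truncate to $0\le\varphi\le 1$, assign to each $x\in E$ a ball $B_x=B(x,r_x)$ with $r_x\le\Lambda r$ on which a local estimate of the form
\[
\mu(B_x)\,r_x^{-s\eta}\le C\,r^{s(p-\eta)}\int_{B_x} g_{\varphi,s,t,\Lambda B}^p
\]
holds, then apply the $5r$-covering lemma and sum. The divergence is in how $r_x$ is chosen, and this is where your argument breaks.

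Your critical-scale definition via the level set $\{\varphi\le 1/2\}$ does not deliver the local estimate. The information you extract is that at least half of $B_k$ lies in $\{\varphi\le 1/2\}$ and at least half of $B(x_k,r_k/2)$ lies in $\{\varphi>1/2\}$. But these two conditions carry \emph{no} quantitative separation: $\varphi$ could equal $1/2-\varepsilon$ on the first set and $1/2+\varepsilon$ on the second, for $\varepsilon$ arbitrarily small. Hence $\intav_{B_k}|\varphi-\varphi_{B_k}|$ has no universal positive lower bound, and the $(s,1,p,t)$-Poincar\'e inequality yields nothing. Your last paragraph acknowledges this obstacle but the proposed fix is the same two-set argument restated; it does not produce a lower bound on the oscillation. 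The single pointwise fact $\varphi(x_k)=1$ versus $\varphi_{B_k}\le 3/4$ is genuine information, but it concerns one point and cannot be fed into an integral Poincar\'e inequality without a further chain argument. (There is also a smaller issue: your bound $r_x\le\Lambda r$ via iterated reverse doubling does not hold as stated, since a single application of \eqref{e.rev_dbl_decay} with $\kappa=2/\Lambda$ gives only $\mu(2B)\le c_R\,\mu(\Lambda B)$, and after comparing $\mu(2B)$ with $\mu(B(x,3r))$ via doubling you need $c_D c_R\le 1/2$, which is not assumed.)

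The paper avoids both problems by a chain plus pigeonhole. One first uses a single application of reverse doubling at $\kappa=2/\Lambda$ to get $\varphi_{\Lambda B}\le c_R<1$, hence $|\varphi(x)-\varphi_{\Lambda B}|\ge 1-c_R$. Telescoping along $B_0=\Lambda B$, $B_j=B(x,2^{-j+1}r)$ and applying the $(s,1,p,t)$-Poincar\'e inequality on each $B_j$ gives
\[
\sum_{j\ge 0}2^{-j\delta}=C(1-c_R)\le C\sum_{j\ge 0} r_j^{s}\Bigl(\intav_{B_j} g^p\Bigr)^{1/p},\qquad \delta=\frac{s(p-\eta)}{p}>0,
\]
and pigeonhole produces a single $j$ (hence $r_x:=r_j\le\Lambda r$) for which $2^{-j\delta p}\le C r_j^{sp}\intav_{B_j} g^p$; unraveling $\delta$ and $r_j=2^{-j+1}r$ gives exactly $\mu(B_x)r_x^{-s\eta}\le C r^{s(p-\eta)}\int_{B_x}g^p$. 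The robust lower bound $1-c_R$ comes from the value $\varphi(x)=1$ against an \emph{average} over a fixed large ball, and the pigeonhole then localizes it; your level-set scale tries to localize first and loses the separation.
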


\begin{proof}
Fix $x\in E$ and write $B_0=\Lambda B=B(x_0,\Lambda r)$,
$r_0=\Lambda r$, $r_j=2^{-j+1}r$ and $B_j=B(x,r_j)$, $j=1,2,\ldots$.
Observe that there are test functions for $\Cp_{s,p,t}(E,2B,\Lambda B)$; let $\varphi$ be
any one of them.
By replacing $\varphi$ with $\max\{0,\min\{\varphi,1\}\}$, if necessary, we may assume that $0\le \varphi \le 1$. 
Thus $\varphi$ is continuous on $X$,  $\varphi=1$ on $E$, and $\varphi=0$ on $B_0\setminus 2B$. By inequality \eqref{e.rev_dbl_decay}, we have
\begin{align*}
0\le \varphi_{B_0}=\vint_{B_0} \varphi(y)\,dy \le \frac{\mu(2B)}{\mu(\Lambda B)}
\le c_R < 1.
\end{align*}
As a consequence, since $x\in E$, we find that
\[
\lvert \varphi(x)- \varphi_{B_0}\rvert  \ge 1-c_R>0.
\]
Let $\delta=s(p-\eta)/p>0$.
Proceeding as in the proof of Theorem \ref{t.qp_impro} with the $(s,1,p,t)$-Poincar\'e inequality given by Lemma \ref{l.qp}, we obtain 
\begin{align*}
\sum_{j=0}^\infty 2^{-j\delta}&=C(s,p,\eta,c_R)(1-c_R)\le C(s,p,\eta,c_R) \lvert \varphi(x)-\varphi_{B_0}\rvert
\\&\le C(s,t,p,\eta,c_R,c_D,\Lambda)\sum_{j=0}^\infty  r_j^{s} 
\biggl(\intav_{B_j} g_{\varphi,s,t,B_0}(y)^p\,dy\biggr)^{\frac{1}{p}}.
\end{align*}
In particular, there exists $j\in \{0,1,2,\ldots\}$,
depending on $x$, such that
\[
2^{-j\delta p}\le C(s,t,p,\eta,c_R,c_D,\Lambda) r_{j}^{sp} \intav_{B_{j}} g_{\varphi,s,t,B_0}(y)^p\,dy .
\]
Write $r_x=r_{j}$ and
$B_x=B(x,r_x)=B_{j}$. Then the previous estimate gives
\[
 \mu(B_x) r_x^{-s\eta}\le C(s,t,p,\eta,c_R,c_D,\Lambda)  r^{s(p-\eta)} \int_{B_x}  g_{\varphi,s,t,B_0}(y)^p\,dy.
\]

By the $5r$-covering lemma \cite[Lemma~1.7]{MR2867756}, we obtain points $x_k\in E$,
$k=1,2,\ldots$, such that the balls $B_{x_k}\subset B_0=\Lambda B$ 
with radii $r_{x_k}\le \Lambda r$ are
pairwise disjoint and 
$E\subset \bigcup_{k=1}^\infty 5B_{x_k}$. Hence,
\begin{align*}
\mathcal{H}^{\mu,s\eta}_{5\Lambda r}(E) &\le \sum_{k=1}^\infty  \mu(5B_{x_k})
(5r_{x_k})^{-s\eta}
 \le  C \sum_{k=1}^\infty  r^{s(p-\eta)} \int_{B_{x_k}}g_{\varphi,s,t,B_0}(x)^p\,dx\\
&\le  C r^{s(p-\eta)} \int_{\Lambda B}g_{\varphi,s,t,\Lambda B}(x)^p\,dx,
\end{align*}
where  $C=C(s,t,p,\eta,c_R,c_D,\Lambda)$. 
The desired inequality follows by taking infimum over all functions
$\varphi$ as above.
\end{proof}

The main result of this section 
is the following version of the fractional (Sobolev--)Poincar\'e inequality,
where the mean value $u_B$ can be omitted from the integral on the left-hand side. 
Due to the zero values on the set $E$, this kind of inequalities are often called
boundary Poincar\'e inequalities. 
The proof  below requires  completeness
of $X$ via \cite[Lemma~5.1]{lehrbackHardyAssouad}, which gives uniform lower
bounds for Hausdorff contents under the assumption that
$\ucodima(E)<sp$. Hence, in the forthcoming applications 
of Theorem~\ref{t.assouad} we also  make  
the assumption that the space $X$ is complete. 
Alternatively, in the following results the condition
$\ucodima(E)<sp$ could be replaced by an explicit
condition in terms of the relative capacity
or a suitable Hausdorff content, and then
the completeness assumption would not be needed.
However, in non-complete spaces one then has 
to add to Theorems~\ref{t.pointwise} and~\ref{t.integrated} also
the assumption that the continuous function $u$ is integrable on balls.

\begin{theorem}\label{t.assouad}
Let $q\geq p \geq 1$, $0<s<1$ and $1\leq t < \infty$. 
Assume that  the space $X$ is complete and 
supports a $(s,q,p,t)$-Poincar\'e inequality, with constants
$c_P$ and $\lambda\ge 1$,
and that $\mu$ is 
reverse doubling, with constants $0<\kappa<1$
and $0<c_R<1$ in~\eqref{e.rev_dbl_decay}. \ed
Let $E$ be a closed set with
$\ucodima(E)<sp$.
Then there is a constant $C>0$ 
such that
\begin{equation}\label{eq.bdry_poinc}
\biggl(\intav_{B} \lvert u(x)\rvert^q\,dx\biggr)^{p/q}
\le C R^{sp}\intav_{\lambda B} \biggl(\int_{\lambda B}\frac{|u(x)-u(y)|^t}{d(x,y)^{st}\mu(B(x,d(x,y)))}\,dy\biggr)^{p/t}\,dx
\end{equation}
whenever $u\colon X\to \R$ is a continuous function 
such that 
$u=0$ on $E$ and $B=B(w,R)$ is a ball with
$w\in E$ and $0<R<\diam(E)/2$.
\end{theorem}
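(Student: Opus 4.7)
The strategy is to combine the Maz$'$ya-type capacitary Poincar\'e inequality of Theorem~\ref{t.Mazya}, the capacity-content comparison of Lemma~\ref{l.codim}, and a uniform Hausdorff content lower bound from \cite[Lemma~5.1]{lehrbackHardyAssouad} that uses $\ucodima(E)<sp$ and the completeness of $X$. For a suitable auxiliary ball $B^{*}$ with $\Lambda B^{*}=B$, the plan is to establish the capacity lower bound
\begin{equation*}
\Cp_{s,p,t}(\iol{B^{*}}\cap E,2B^{*},\Lambda B^{*})\ge c\,\mu(B^{*})\,R^{-sp};
\end{equation*}
inserting this into Theorem~\ref{t.Mazya} and absorbing $\mu(B^{*})$ into $\mu(\lambda B)$ via the doubling property then yields~\eqref{eq.bdry_poinc}.

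To arrange this, I would first fix $\Lambda\ge 2$ large enough that iterating~\eqref{e.rev_dbl_decay} shows $\mu$ is reverse doubling with the specific constant $\kappa=2/\Lambda$, as required by Lemma~\ref{l.codim}. Given $B=B(w,R)$ with $w\in E$ and $R<\diam(E)/2$, set $B^{*}=B(w,R/\Lambda)$, so that $\Lambda B^{*}=B$ and the scale condition $R/\Lambda<\diam(X)/(2\Lambda)$ of Lemma~\ref{l.codim} follows from $R<\diam(E)/2\le\diam(X)/2$. Applying Theorem~\ref{t.Mazya} with $B^{*}$ in place of $B$, and using that $\iol{B^{*}}\cap E\subset \iol{B^{*}}\cap Z$ for $Z=\{u=0\}$, the monotonicity of the relative capacity in the target set gives
\begin{equation*}
\biggl(\intav_{B}|u(x)|^{q}\,dx\biggr)^{p/q}
\le \frac{C}{\Cp_{s,p,t}(\iol{B^{*}}\cap E,2B^{*},\Lambda B^{*})}\int_{\lambda B}g_{u,s,t,\lambda B}(x)^{p}\,dx.
\end{equation*}

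It remains to produce the advertised lower bound on the capacity. Since $\ucodima(E)<sp$, I would choose $\eta$ with $\ucodima(E)/s<\eta<p$, so that $0\le\eta<p$ and $s\eta>\ucodima(E)$. Lemma~\ref{l.codim}, applied to $\iol{B^{*}}\cap E$, yields
\begin{equation*}
\mathcal{H}^{\mu,s\eta}_{5R}(\iol{B^{*}}\cap E)\le C\,(R/\Lambda)^{s(p-\eta)}\,\Cp_{s,p,t}(\iol{B^{*}}\cap E,2B^{*},\Lambda B^{*}).
\end{equation*}
On the other hand, $w\in E$, $R/\Lambda<\diam(E)$, and $\ucodima(E)<s\eta$, so the completeness of $X$ together with \cite[Lemma~5.1]{lehrbackHardyAssouad} supplies a uniform lower bound of the form $\mathcal{H}^{\mu,s\eta}_{5R}(\iol{B^{*}}\cap E)\ge c\,\mu(B^{*})(R/\Lambda)^{-s\eta}$. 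Combining the last two estimates produces the target capacity bound, and the boundary Poincar\'e inequality follows as described above. The main technical point is the parameter coordination: $\Lambda$ has to be large enough simultaneously to activate the reverse-doubling hypothesis of Lemma~\ref{l.codim} with $\kappa=2/\Lambda$ and to make $5R$ an admissible scale for the external Hausdorff content lower bound, while $\eta$ must be threaded between $\ucodima(E)/s$ and $p$; once these choices are made, everything else is a routine bookkeeping of constants.
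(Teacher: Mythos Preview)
Your proposal is correct and follows essentially the same route as the paper: apply Theorem~\ref{t.Mazya} to the shrunken ball $B^{*}=B(w,R/\Lambda)$ with $\Lambda B^{*}=B$, bound the relative capacity from below via Lemma~\ref{l.codim} and the Hausdorff content lower bound of \cite[Lemma~5.1]{lehrbackHardyAssouad}, and conclude using monotonicity of capacity and doubling. The only cosmetic difference is that the paper takes $\Lambda=2/\kappa$ directly (so the reverse-doubling hypothesis of Lemma~\ref{l.codim} is met by assumption, without iteration), whereas you allow any sufficiently large $\Lambda$; both work.
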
 

Note that \eqref{eq.bdry_poinc}  can be written as 
\[
\biggl(\intav_{B} \lvert u(x)\rvert^q\,dx\biggr)^{p/q}
\le 
C R^{sp}\intav_{\lambda B} 
g_{u,s,t,\lambda B}(x)^p\,dx.
\]

\begin{proof}
Fix a number $0\le \eta<p$ such that 
$\ucodima(E)<s \eta$, and let 
$w\in E$ and 
$0<R<\diam(E)/2$.
Write $\Lambda = 2/\kappa>2$ and $r=R/\Lambda<\diam(E)/(2\Lambda)\le \diam(X)/(2\Lambda)$.
We prove the claim \eqref{eq.bdry_poinc} for the ball $B(w,R)$, but for simplicity
we write during the proof that $B=B(w,r)=B(w,R/\Lambda)$. 

By a covering argument using the doubling condition and completeness of $X$, 
see \cite[Lemma~5.1]{lehrbackHardyAssouad}, we obtain
\begin{align*}
r^{-s\eta}\mu(B)&\le 
C\mathcal{H}^{\mu,s\eta}_{r}(\iol{B}\cap E)\le 
C\mathcal{H}^{\mu,s\eta}_{5\Lambda r}(\iol{B}\cap E)\\&
\le C r^{s(p-\eta)}\Cp_{s,p,t}(\iol{B}\cap E,2B,\Lambda B).
\end{align*}
Write $Z=\{y\in X :  u(y)=0\}\supset E$. By the monotonicity of capacity and the doubling condition we have
\[
\frac{1}{\Cp_{s,p,t}(\iol{B}\cap Z,2B,\Lambda B)}
\le \frac{1}{\Cp_{s,p,t}(\iol{B}\cap E,2B,\Lambda B)}
\le \frac{C r^{sp}}{\mu(B)}\le \frac{C R^{sp}}{\mu(\lambda\Lambda B)}.
\]
The desired inequality, for the ball $B(w,R)=B(w,\Lambda r)$, follows from Theorem~\ref{t.Mazya}.
\end{proof}

\begin{corollary}\label{cor.bdry_poinc}
Assume that $X$ is complete. 
Let $Q>0$ and $c_Q>0$ be the constants in~\eqref{e.doubling_quant},  and let
$q,p \geq 1$, $0<s<1$ and $1\leq t < \infty$
be such that either $q\le p\le t$, or $q\le p^*=Qp/(Q-sp)<\infty$ and $t=p$.
Assume that $\mu$ is 
reverse doubling, 
and let $E$ be a closed set with
$\ucodima(E)<sp$.
Then there is a constant $C>0$ such that
the boundary Poincar\'e inequality~\eqref{eq.bdry_poinc}
holds 
whenever $u\colon X\to \R$ is a continuous function such that
$u=0$ on $E$ and $B=B(w,R)$ is a ball with
$w\in E$ and $0<R<\diam(E)/2$.
\end{corollary}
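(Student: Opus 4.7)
The plan is to reduce Corollary~\ref{cor.bdry_poinc} to Theorem~\ref{t.assouad} by verifying, in each of the two listed parameter ranges, that $X$ supports an $(s,\tilde q,p,t)$-Poincar\'e inequality for a suitable $\tilde q\ge p$. Theorem~\ref{t.assouad} then gives the boundary Poincar\'e inequality at the exponent $\tilde q$, and I descend from $\tilde q$ to any $q\le\tilde q$ by Jensen's inequality on the probability measure $\mu/\mu(B)$, which yields $(\intav_B|u|^q\,dx)^{1/q}\le(\intav_B|u|^{\tilde q}\,dx)^{1/\tilde q}$ and hence $(\intav_B|u|^q\,dx)^{p/q}\le(\intav_B|u|^{\tilde q}\,dx)^{p/\tilde q}$.

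In the case $q\le p\le t$, I would take $\tilde q=p$. Since $p\le\min\{p,t\}$, Lemma~\ref{l.qp} immediately furnishes the $(s,p,p,t)$-Poincar\'e inequality on $X$ with $\lambda=1$; the completeness of $X$, the reverse doubling of $\mu$, and the codimension condition $\ucodima(E)<sp$ assumed in the corollary are precisely the remaining hypotheses of Theorem~\ref{t.assouad}. In the case $t=p$ and $q\le p^*=Qp/(Q-sp)<\infty$, I would take $\tilde q=p^*$; the finiteness of $p^*$ forces $sp<Q$, so Theorem~\ref{t.qp_impro} (this is where the reverse doubling is genuinely used) supplies the $(s,p^*,p,p)$-Poincar\'e inequality with $\lambda=2$. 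In either case Theorem~\ref{t.assouad} produces
$$
\biggl(\intav_B|u(x)|^{\tilde q}\,dx\biggr)^{p/\tilde q}\le CR^{sp}\intav_{\lambda B}g_{u,s,t,\lambda B}(x)^p\,dx
$$
for every admissible ball $B=B(w,R)$ and every continuous $u$ vanishing on $E$, and the Jensen step above then delivers the claimed inequality \eqref{eq.bdry_poinc}.

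No step here is genuinely hard: Lemma~\ref{l.qp} and Theorem~\ref{t.qp_impro} have been arranged so as to produce exactly the $(s,\tilde q,p,t)$-Poincar\'e inequality that Theorem~\ref{t.assouad} consumes, and the descent from $\tilde q$ to $q\le\tilde q$ is routine. The only bookkeeping to check is that Theorem~\ref{t.assouad} demands $\tilde q\ge p$, which is automatic for $\tilde q\in\{p,p^*\}$ since $p\le p^*$, and that in the first case the right-hand side of Theorem~\ref{t.assouad} uses the same exponent $t$ as in the statement of the corollary, so no adjustment of the integrand is needed.
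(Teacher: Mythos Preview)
Your proposal is correct and follows essentially the same approach as the paper: in each case you invoke Lemma~\ref{l.qp} (for $\tilde q=p$ when $p\le t$) or Theorem~\ref{t.qp_impro} (for $\tilde q=p^*$ when $t=p$) to supply the required $(s,\tilde q,p,t)$-Poincar\'e inequality, feed it into Theorem~\ref{t.assouad}, and then descend from $\tilde q$ to $q\le\tilde q$ on the left-hand side. The paper phrases the last step as ``H\"older's inequality on the left-hand side'' rather than Jensen, but for integral averages with $q\le\tilde q$ these are the same inequality.
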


\begin{proof}
By Lemma~\ref{l.qp}, $X$ supports a $(s,q,p,t)$-Poincar\'e inequality whenever $q\le\min\{p,t\}$.
In particular $X$ supports a $(s,p,p,t)$-Poincar\'e inequality whenever $p\le t$,
and for $q\le p\le t$ the claim then follows from Theorem~\ref{t.assouad} and H\"older's inequality on 
the left-hand side. 

On the other hand, $X$ supports a $(s,p^*,p,p)$-Poincar\'e inequality by Theorem~\ref{t.qp_impro}.
Theorem~\ref{t.assouad} gives the desired inequality for $q=p^*$ and $t=p$, and for $q\le p^*$ and $t=p$ 
the claim follows again from H\"older's inequality on the left-hand side.
\end{proof}

\section{Pointwise and integral Hardy inequalities}\label{s.hardy}

In this section we apply the  
Sobolev--Poincar\'e and boundary Poincar\'e inequalities from the previous sections 
to fractional Hardy-type inequalities involving distance weights. 
We begin with a pointwise version of the fractional  Hardy inequality, given in terms of 
the fractional maximal function. For $\alpha\in\R$ and a measurable function $u$ on $X$, 
this is defined as
\[
M_\alpha u(x)=\sup_{r>0}r^\alpha\vint_{B(x,r)} \lvert u(y)\rvert\,dy,\qquad \text{ for every }  x\in X.
\]
In particular, if $\alpha=0$, then $M_\alpha=M_0=M$ is the centered
Hardy--Littlewood maximal function.

\begin{theorem}\label{t.pointwise}
Let $\alpha\in\R$, $q\geq p \geq 1$, $\alpha<s<1$ and $1\leq t < \infty$.
Assume that $X$ is complete  and 
supports a $(s,q,p,t)$-Poincar\'e inequality
and that $\mu$ is 
reverse doubling. 
Let $E\subset X$ be a closed set with
$\ucodima(E)<sp$, and assume
that $u\colon X\to \R$ is a continuous function 
such that 
$u=0$ on $E$. 
Then there is a constant $C>0$, independent of $u$,  such that
\[
\lvert u(x)\rvert \le C d(x,E)^{s-\alpha} \bigl(M_{\alpha p}\bigl(\ch{B}(g_{u,s,t,B})^p\bigr)(x)\bigr)^{1/p}
\]
whenever $0<d(x,E)<\diam(E)$ 
and $B=B(x,2d(x,E))$.
\end{theorem}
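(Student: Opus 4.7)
The plan is to combine a shrinking-ball telescoping around $x$ with the boundary Poincar\'e inequality (Theorem~\ref{t.assouad}) applied at a point of $E$ close to $x$, connected by a Whitney-type chain. Write $d=d(x,E)$, $g=g_{u,s,t,B}$, and $M=M_{\alpha p}(\ch{B}\,g^p)(x)$, let $\lambda\ge 1$ be the dilation constant of the assumed $(s,q,p,t)$-Poincar\'e inequality, and pick $w\in E$ with $d(x,w)\le (3/2)d$. The governing constraint is that every working ball must have its $\lambda$-dilate inside $B=B(x,2d)$, so the corresponding fractional gradient is dominated by $g$. We may assume $M<\infty$.

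First I would telescope from $x$. Setting $r_0=d/(2\lambda)$ and $B_j=B(x,2^{-j}r_0)$, the dilates $\lambda B_j=B(x,2^{-j-1}d)$ all sit inside $B$. The continuity of $u$ gives $u(x)=\lim_j u_{B_j}$, and the doubling estimate $|u_{B_{j+1}}-u_{B_j}|\le c_D\vint_{B_j}|u-u_{B_j}|$ combined with the $(s,1,p,t)$-Poincar\'e inequality of Lemma~\ref{l.qp} (valid since $1\le\min\{p,t\}$) yields
\[
|u_{B_{j+1}}-u_{B_j}|\le C r_j^s\bigl(\vint_{\lambda B_j} g^p\bigr)^{1/p}\le C r_j^s(\lambda r_j)^{-\alpha}M^{1/p},
\]
where the last step uses $\lambda B_j\subset B$ (so $g_{u,s,t,\lambda B_j}\le g$) and the definition of $M$. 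Summing the resulting geometric series (convergent because $\alpha<s$) gives $|u(x)-u_{B_0}|\le Cd^{s-\alpha}M^{1/p}$.

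Next I would apply Theorem~\ref{t.assouad} to $B_w=B(w,d/(2\lambda))$: its radius is less than $\diam(E)/2$ by $d<\diam(E)$, and $\lambda B_w=B(w,d/2)\subset B$ thanks to $d(x,w)\le(3/2)d$. The boundary Poincar\'e inequality
\[
\bigl(\vint_{B_w}|u|^q\bigr)^{1/q}\le Cd^s\bigl(\vint_{\lambda B_w} g^p\bigr)^{1/p}
\]
combined with the doubling comparison $\mu(B)\le C\mu(\lambda B_w)$ (since $B\subset B(w,7d/2)$) and the bound $\vint_B g^p\le(2d)^{-\alpha p}M$ from the definition of $M$ produces $\vint_{B_w}|u|\le Cd^{s-\alpha}M^{1/p}$.

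Finally, to connect $B_0$ (centered at $x$) and $B_w$ (centered at $w$) I would use a short chain $\widetilde B_0=B_0,\ldots,\widetilde B_N=B_w$ of overlapping balls of radius $r_0$ whose centers travel from $x$ to $w$ inside $B$ and whose $\lambda$-dilates still lie in $B$; the length $N$ is $O(\lambda)$ by doubling. Each link is controlled by the same Poincar\'e plus $M$-estimate as in the first step, giving $|u_{B_0}-u_{B_w}|\le Cd^{s-\alpha}M^{1/p}$, and the triangle inequality closes the proof:
\[
|u(x)|\le|u(x)-u_{B_0}|+|u_{B_0}-u_{B_w}|+|u_{B_w}|\le Cd^{s-\alpha}M^{1/p}.
\]
The main obstacle is that the two natural requirements $\lambda B_w\subset B$ (so that $g_{u,s,t,\lambda B_w}\le g$) and $x\in B_w$ are mutually incompatible when $\lambda>1$; this forces both the small radius $d/(2\lambda)$ and the bridging chain, and makes the bookkeeping of every $\lambda$-dilate staying inside the fixed ball $B=B(x,2d)$ the real technical core of the argument.
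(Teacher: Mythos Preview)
Your three-term decomposition is exactly the paper's strategy, and your use of Theorem~\ref{t.assouad} at the ball $B_w=B(w,d/(2\lambda))$ coincides with the paper's $\widetilde B$. The difference is that you have manufactured an obstacle that is not really there. Lemma~\ref{l.qp} furnishes the $(s,1,p,t)$-Poincar\'e inequality with dilation constant equal to~$1$, so in the telescoping you can (and the paper does) start from $B_0=B=B(x,2d)$ itself and set $B_j=2^{-j}B$; since each $B_j\subset B$ one has $g_{u,s,t,B_j}\le g$, and the average $\vint_{B_j}g^p$ (not $\vint_{\lambda B_j}g^p$) is already dominated by $r_j^{-\alpha p}M$. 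The dilation constant $\lambda$ of the assumed $(s,q,p,t)$-Poincar\'e inequality enters only through Theorem~\ref{t.assouad}, where you have correctly arranged $\lambda B_w\subset B$. Because the telescoping now reaches the big ball $B$, the link to $B_w$ is a single step: $B_w\subset B$ with comparable measures gives $|u_B-u_{B_w}|\le C\vint_B|u-u_B|$, and one more application of Lemma~\ref{l.qp} on $B$ finishes. No chain is needed, and the ``incompatibility'' you flag disappears.

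Your bridging chain, as written, also has a gap: in a general complete doubling metric space there is no reason one can choose centers ``traveling from $x$ to $w$'' with consecutive distances below $2r_0$, and the doubling of $\mu$ does not produce such intermediate points (the space need not be quasiconvex, or even connected). The remedy is precisely the simplification above: replace the chain by the single containment $B_w\subset B$ and use the big ball $B$ as the intermediary, which is what the paper does.
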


\begin{proof}
Observe that the  continuous  function $u$ is integrable on balls since $X$ is complete,
see  \cite[Proposition 3.1]{MR2867756}.
Fix $x\in X$ with $0<d(x,E)<\diam(E)$ 
and let $B=B(x,2d(x,E))$.
Write $r=2d(x,E)>0$ and choose $w\in E$ such that $d(x,w)<(3/2)d(x,E)$. Then
\[
\widetilde B=B(w,r/(4\lambda))\subset B,
\]
where $\lambda\ge 1$ is the constant in the assumed $(s,q,p,t)$-Poincar\'e inequality, 
and 
\begin{equation}\label{eq.to_three}
\begin{split}
\lvert u(x)\rvert = \lvert u(x)-u_B+u_B - u_{\widetilde B}+u_{\widetilde B}\rvert
\le \lvert u(x)-u_B\rvert+\lvert u_B-u_{\widetilde B}\rvert + \lvert u_{\widetilde B}\rvert.
\end{split} 
\end{equation}
We estimate each of the terms on the right-hand side separately.

First observe \ed that $\lambda \widetilde B\subset B$
and that the measures of these two balls are comparable, 
with constants only depending on $c_D$. 
Hence, by applying Theorem~\ref{t.assouad} for the ball $\widetilde B$,
whose radius is $r/(4\lambda)<\diam(E)/2$, we obtain 
\begin{align*}
\lvert u_{\widetilde B}\rvert&\le
\biggl(\vint_{\widetilde B} \lvert u(y)\rvert^q\,dy\biggr)^{1/q} 
\le Cr^{s-\alpha}\biggl(r^{\alpha p}\intav_{\lambda \widetilde B} \ch{B}(y)
 g_{u,s,t,\lambda \widetilde B}(y)^p\,dy\biggr)^{1/p}\\
&\le Cr^{s-\alpha}\biggl(r^{\alpha p}\intav_{B} \ch{B}(y)g_{u,s,t,B}(y)^p\,dy\biggr)^{1/p}\\
&\le Cd(x,E)^{s-\alpha} \bigl(M_{\alpha p}\bigl(\ch{B}(g_{u,s,t,B})^p\bigr)(x)\bigr)^{1/p}.
\end{align*}
Recall from Lemma \ref{l.qp} that $X$ supports a $(s,1,p,t)$-Poincar\'e inequality, with constants $\lambda=1$ and $C(s,t,c_D)$.
By the doubling condition, followed by the $(s,1,p,t)$-Poincar\'e inequality, we obtain 
\begin{align*}
\lvert u_B-u_{\widetilde B}\rvert&\le C \vint_{B} \lvert u(y)-u_B\rvert\,dy
\le  Cr^{s-\alpha}\biggl(r^{\alpha p}\intav_{B} \ch{B}(y)g_{u,s,t,B}(y)^p\,dy\biggr)^{1/p}
\\&\le Cd(x,E)^{s-\alpha} \bigl(M_{\alpha p}\bigl(\ch{B}(g_{u,s,t,B})^p\bigr)(x)\bigr)^{1/p}.
\end{align*}

 In order to estimate the term $\lvert u(x)-u_B\rvert$, we write
$B_j=2^{-j}B=B(x,2^{-j}r)$ for $j=0,1,2,\ldots$.
Since   $\lim_{j\to \infty} u_{B_j}= u(x)$, we find that
\begin{align*}
\lvert u(x)-u_B\rvert&\le \sum_{j=0}^\infty \lvert u_{B_j}-u_{B_{j+1}}\rvert
 \le C\sum_{j=0}^\infty  \vint_{B_j} \lvert u(y)-u_{B_j}\rvert\,dy\\
&\le C \sum_{j=0}^\infty (2^{-j}r)^{s}\biggl(\intav_{{B_j}}
g_{u,s,t,B_j}(y)^p\,dy\biggr)^{1/p}\\
&\le C r^{s-\alpha}\sum_{j=0}^\infty 2^{-j(s-\alpha)}\biggl((2^{-j}r)^{\alpha p}\intav_{{B_j}}
\ch{B}(y)g_{u,s,t,B}(y)^p\,dy\biggr)^{1/p}\\
&\le C r^{s-\alpha}\sum_{j=0}^\infty 2^{-j(s-\alpha)} \bigl(M_{\alpha p}\bigl(\ch{B}(g_{u,s,t,B})^p\bigr)(x)\bigr)^{1/p}\\
&=C d(x,E)^{s-\alpha}\bigl(M_{\alpha p}\bigl(\ch{B}(g_{u,s,t,B})^p\bigr)(x)\bigr)^{1/p}.
\end{align*}
The claim follows from~\eqref{eq.to_three} and the estimates above. 
\end{proof}

Pointwise Hardy inequalities imply localized Hardy inequalities
for balls centered at $E$. Here we restrict ourselves to the case $q=p$.

\begin{theorem}\label{t.integrated}
Let $0<s<1$ and
$1<t<\infty$.
 Assume that $X$ is complete
and that $\mu$ is 
reverse doubling. 
Let $E\subset X$ be a closed set with
$\ucodima(E)<st$, and 
let $u\colon X\to\R$ be a continuous function such that
$u=0$ on $E$.
Then there is a constant $C>0$, independent of $u$, such that
\[
\int_{B\setminus E} \frac{\lvert u(x)\rvert^t}{d(x,E)^{st}}\,dx
\le C \int_{3B}\int_{3B}
\frac{|u(x)-u(y)|^t}{d(x,y)^{st}\mu(B(x,d(x,y)))}\,dy\,dx
\]
whenever $B=B(w,r)$ with $w\in E$ and $0<r<\diam(E)$.
\end{theorem}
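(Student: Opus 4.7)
The plan is to reduce Theorem~\ref{t.integrated} to the pointwise Hardy inequality of Theorem~\ref{t.pointwise} and then invoke the strong-type boundedness of the Hardy--Littlewood maximal operator. The crucial trick is to apply Theorem~\ref{t.pointwise} with an \emph{auxiliary} exponent $p$ strictly smaller than $t$, so that the $L^{t/p}$ boundedness of $M$ (which holds as $t/p>1$) can absorb the maximal function on the right-hand side.

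First I would choose such a $p$. Since $\ucodima(E)<st$ and $t>1$, I can pick $p\in[1,t)$ with $\ucodima(E)<sp$: take $p=1$ if $\ucodima(E)<s$, and otherwise any $p\in(\ucodima(E)/s,\,t)$. With this $p$, Lemma~\ref{l.qp} furnishes the $(s,p,p,t)$-Poincar\'e inequality (since $p\le t$), so every hypothesis of Theorem~\ref{t.pointwise} is met with $q=p$ and $\alpha=0$. For $x\in B\setminus E$, the closedness of $E$ gives $d(x,E)>0$, and $d(x,E)\le d(x,w)<r<\diam(E)$. Moreover $B_x:=B(x,2d(x,E))\subset 3B$, and the monotonicity $g_{u,s,t,B_x}\le g_{u,s,t,3B}$ lets me inflate the ball in the pointwise bound to obtain
\[
\lvert u(x)\rvert^p \le C\,d(x,E)^{sp}\, M\bigl(\ch{3B}\,g_{u,s,t,3B}^{\,p}\bigr)(x).
\]

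Next I would raise this to the power $t/p$, divide by $d(x,E)^{st}$, and integrate over $B\setminus E$ to get
\[
\int_{B\setminus E}\frac{\lvert u(x)\rvert^t}{d(x,E)^{st}}\,dx \le C\int_B \bigl[M\bigl(\ch{3B}\,g_{u,s,t,3B}^{\,p}\bigr)(x)\bigr]^{t/p}\,dx.
\]
Because $t/p>1$ and $\mu$ is doubling, the centered Hardy--Littlewood maximal operator is bounded on $L^{t/p}(X)$, so the last integral is at most $C\int_{3B}g_{u,s,t,3B}^{\,t}\,dx$, which by the definition of $g_{u,s,t,3B}$ is precisely the double integral claimed on the right-hand side.

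The main obstacle I anticipate is verifying that an admissible auxiliary exponent $p$ exists: one simultaneously needs $p\ge 1$ for Lemma~\ref{l.qp}, $p<t$ for the $L^{t/p}$ maximal bound, and $\ucodima(E)<sp$ for Theorem~\ref{t.pointwise}. The hypothesis $\ucodima(E)<st$ combined with $t>1$ opens exactly the window that makes such a $p$ available, and this is also where the assumption $t>1$ (rather than $t\ge 1$) enters in an essential way. Once $p$ is chosen, the rest is a routine combination of the pointwise bound, the containment $B_x\subset 3B$, and the $L^{t/p}$ maximal inequality.
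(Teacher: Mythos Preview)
Your proposal is correct and follows essentially the same approach as the paper: choose $1\le p<t$ with $\ucodima(E)<sp$, apply the pointwise inequality of Theorem~\ref{t.pointwise} with $\alpha=0$ and $q=p$ (using the $(s,p,p,t)$-Poincar\'e inequality from Lemma~\ref{l.qp}), inflate $B_x\subset 3B$, and finish with the $L^{t/p}$ boundedness of the Hardy--Littlewood maximal operator. Your discussion of why an admissible $p$ exists is in fact more explicit than the paper's own proof.
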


\begin{proof}
Fix an exponent $1\le p<t$ in such a way that $\ucodima(E)<sp$. 
By Lemma \ref{l.qp} we find that $X$ supports
the $(s,p,p,t)$-Poincar\'e inequality with constants
$\lambda=1$ and $c_P=c_P(s,p,t,c_D)$.
Fix $x\in B\setminus E$. 
Then 
 $0<d(x,E)<r<\diam(E)$ and 
$B(x,2d(x,E))\subset 3B$.
Hence, by Theorem~\ref{t.pointwise} with $\alpha=0$ and $q=p$,
\begin{align*}
\frac{\lvert u(x)\rvert^t}{d(x,E)^{st}} &\le  C
 \bigl(M\bigl(\ch{B(x,2d(x,E))}(g_{u,s,t,B(x,2d(x,E))})^p\bigr)(x)\bigr)^{t/p}\\
&\le C
 \bigl(M\bigl(\ch{3B}(g_{u,s,t,3B})^p\bigr)(x)\bigr)^{t/p}.
\end{align*}
Integrating this inequality over the set $B\setminus E$ we obtain
\[
\int_{B\setminus E} \frac{\lvert u(x)\rvert^t}{d(x,E)^{st}}\,dx
\le C\int_X \bigl(M\bigl(\ch{3B}(g_{u,s,t,3B})^p\bigr)(x)\bigr)^{t/p}\,dx.
\]
Since $t>p$, the Hardy--Littlewood maximal theorem~\cite[Theorem~3.13]{MR2867756}
implies that
\[
\int_{B\setminus E} \frac{\lvert u(x)\rvert^t}{d(x,E)^{st}}\,dx
\le C\int_{3B} g_{u,s,t,3B}(x)^t\,dx.
\]
This concludes the proof.
\end{proof}

Next, we obtain a (partial) converse of Theorem~\ref{t.integrated}.
This shows that the dimensional condition $\ucodima(E) < st$ in Theorems~\ref{t.pointwise} 
and~\ref{t.integrated} is essentially sharp,
 up to the end point. The idea behind the proof
goes back to~\cite[Section~2]{Dyda2004}, where the impossibility of a
fractional Hardy inequality was shown in certain open sets 
$\Omega$ of the Euclidean space,
for instance if $\Omega$ is a Lipschitz domain and $st\le 1$
(note that in this case $\ucodima(\partial\Omega)=1$). 
On the other hand, a necessary condition for non-fractional
pointwise Hardy inequalities in metric spaces has been given 
in~\cite[Lemma~3]{KorteEtAl2011} in terms of a Hausdorff content
density condition.

\begin{theorem}\label{t.converse}
Let $0<s<1$, $1<t<\infty$, and $\lambda\ge 1$. Assume 
that $E\subset X$ is a (nonempty) closed set such that
\[
\int_{B\setminus E} \frac{\lvert u(x)\rvert^t}{d(x,E)^{st}}\,dx
\le C \int_{\lambda B}\int_{\lambda B}
\frac{\lvert u(x)-u(y)\rvert^t}{d(x,y)^{st}\mu(B(x,d(x,y)))}\,dy\,dx
\]
whenever $u\colon X\to\R$ is a bounded continuous function such that
$u=0$ on $E$,
and $B=B(w,r)$ with $w\in E$ and $0<r<\diam(E)$.
Then $\ucodima(E)\le st$.
\end{theorem}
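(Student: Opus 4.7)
The plan is to test the hypothesized Hardy inequality against a well-chosen truncation of $d(\cdot,E)$, derive matching lower and upper bounds on the two sides, and read off the Assouad codimension estimate. Fix $w \in E$ and $0 < \rho < R < \diam(E)$, set $B' = B(w, R/\lambda)$, and take $u_\rho(x) = \min\{1, d(x,E)/\rho\}$: this is bounded by $1$, $\rho^{-1}$-Lipschitz (hence continuous), vanishes on $E$, and equals $1$ on $E_\rho^c$. Since $d(x,E) \le d(x,w) < R/\lambda$ for every $x \in B'$, dropping the contribution from $E_\rho \cap B'$ bounds the left-hand side of the hypothesis from below by
\[
\int_{B' \setminus E_\rho} \frac{dx}{d(x,E)^{st}} \ge C R^{-st}\bigl(\mu(B') - \mu(B'\cap E_\rho)\bigr).
\]

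For the right-hand side, the crucial facts are $|u_\rho(x)-u_\rho(y)| \le \min\{1, d(x,y)/\rho\}$ and that the integrand vanishes whenever both $x,y$ lie in $E_\rho^c$. I would split the double integral over $\lambda B' = B(w,R)$ according to whether $x \in E_\rho$, restricting $y$ to $E_\rho$ in the complementary case. For fixed $x \in E_\rho \cap \lambda B'$, splitting the inner integral at $d(x,y) = \rho$ gives two pieces: the near-diagonal part $d(x,y) \le \rho$ is controlled by $C\rho^{-st}$ via Lemma~\ref{l.alpha} applied with $\alpha = t(1-s) > 0$, and the far part $d(x,y) > \rho$ is also controlled by $C\rho^{-st}$ via a dyadic annular decomposition and the doubling condition. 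In the asymmetric case $x \in E_\rho^c$, $y \in E_\rho$, I would first symmetrize the kernel using the doubling bound $\mu(B(x,d(x,y))) \ge c_D^{-1}\mu(B(y,d(x,y)))$ and then apply the same two-piece estimate with the roles of $x$ and $y$ swapped. Summing the contributions yields that the right-hand side is at most $C\rho^{-st}\,\mu\bigl(E_\rho \cap B(w,R)\bigr)$.

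Equating the two sides produces $\mu(B') - \mu(B'\cap E_\rho) \le C(R/\rho)^{st}\mu(E_\rho \cap B(w,R))$, and a dichotomy on whether $\mu(B'\cap E_\rho)$ exceeds $\tfrac12\mu(B')$, combined with the doubling comparison $\mu(B') \asymp \mu(B(w,R))$, yields for every $0 < \rho < R/\lambda$ the estimate
\[
\mu\bigl(E_\rho \cap B(w,R)\bigr) \ge c\,\mu(B(w,R))\,(\rho/R)^{st}.
\]
The remaining range $R/\lambda \le \rho < R$ is immediate, since $B(w,\rho) \subset E_\rho \cap B(w,R)$ and $\mu(B(w,\rho)) \ge c\,\mu(B(w,R))$ by doubling. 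This verifies the density condition of Definition~\ref{def.ucodima} with exponent $st$, so $\ucodima(E) \le st$. The main technical obstacle is the right-hand side estimate: obtaining the factor $\mu(E_\rho \cap B(w,R))$ rather than the trivial $\mu(\lambda B')$ is essential for the dichotomy to yield a nontrivial lower bound, and this forces both the use of the cancellation $u_\rho(x) = u_\rho(y) = 1$ on $E_\rho^c$ and the symmetrization of the non-symmetric kernel via doubling.
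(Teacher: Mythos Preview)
Your proposal is correct and follows essentially the same strategy as the paper's proof: the same test function (up to a harmless constant in the Lipschitz scaling), the same dichotomy on $\mu(B'\cap E_\rho)$ versus $\tfrac12\mu(B')$, and the same cancellation-plus-near/far decomposition to bound the fractional seminorm by $C\rho^{-st}\mu(E_\rho\cap B(w,R))$. Your handling of the asymmetric piece $x\in E_\rho^c$, $y\in E_\rho$ by first symmetrizing the kernel via doubling and then integrating out $x$ is in fact a bit cleaner than the paper's dyadic $F_k/A_j$ bookkeeping for that term, but the two arguments are otherwise parallel.
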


\begin{proof}
Let $w\in E$ and $0<r<R_0<\diam(E)$. 
It suffices to show that
\begin{equation}\label{eq.codim_est}
\frac{\mu(E_r\cap B(w,R_0))}{\mu(B(w,R_0))}\ge c\Bigl(\frac{r}{R_0}\Bigr)^{st},
\end{equation}
where the constant $c$ is independent of $w$, $r$ and $R_0$.
For convenience, write $R=R_0/\lambda$ and $B=B(w,R)$, so that
$\lambda B=B(w,R_0)$.

If $\mu(E_r\cap B(w,R))\ge \frac 1 2 \mu(B(w,R))$, the claim is clear since then,
by doubling,
\[
\mu(E_r\cap B(w,R_0))\ge\mu(E_r\cap B(w,R))\ge\tfrac 1 2 \mu(B(w,R))\ge c \mu(B(w,R_0)),
\]
and on the other hand $\bigl(\frac{r}{R_0}\bigr)^{st}\le 1$. Thus we may assume that
$\mu(E_r\cap B(w,R)) < \tfrac 1 2 \mu(B(w,R))$, whence
\begin{equation}\label{eq.compl_meas}
\mu(B(w,R)\setminus E_r) \ge \tfrac 1 2 \mu(B(w,R))>0.
\end{equation}
Notice that then in particular $r < R=R_0/\lambda$
since otherwise $B(w,R)\setminus E_r=\emptyset$.

Let us now consider the continuous and bounded function $u\colon X\to\R$,
\[
u(x)=\min\{1,4r^{-1}d(x,E)\},\qquad x\in X.
\]
Then $u=0$ on $E$, $u=1$ in $X\setminus E_{r/4}$, and 
\[
\lvert u(x)-u(y)\rvert \le \min\bigl\{1, 4r^{-1}d(x,y)\bigr\} \quad \text{ for all } x,y\in X.
\]
Since $d(x,E)^{-st}\ge R^{-st}$ for  $x\in B(w,R)\setminus E_r$, we obtain
\begin{equation}\label{eq.lhs_lower}
\begin{split}
\int_{B\setminus E} \frac{\lvert u(x)\rvert^t}{d(x,E)^{st}}\,dx
& \ge \int_{B\setminus E_r} d(x,E)^{-st}\,dx
  \ge R^{-st}\mu(B(w,R)\setminus E_r)\\ 
& \ge \tfrac 1 2 R^{-st} \mu(B(w,R))\ge c R_0^{-st} \mu(B(w,R_0)),
\end{split}
\end{equation}
where the penultimate step follows from~\eqref{eq.compl_meas}
and the final inequality holds by doubling.

To prove the claim~\eqref{eq.codim_est}, it hence suffices to show that
\begin{equation}\label{eq.rhs_upper}
\int_{\lambda B}\int_{\lambda B}
\frac{\lvert u(x)-u(y)\rvert^t}{d(x,y)^{st}\mu(B(x,d(x,y)))}\,dy\,dx
\le C r^{-st}\mu(E_r\cap B(w,R_0)).
\end{equation}
Then~\eqref{eq.codim_est} follows directly from 
estimates~\eqref{eq.lhs_lower} and~\eqref{eq.rhs_upper} and the assumed
local fractional Hardy inequality.

Write 
\[
K(x,y)=\frac{\lvert u(x)-u(y)\rvert^t}{d(x,y)^{st}\mu(B(x,d(x,y)))}
\]
whenever $x,y\in \lambda B$, $x\not=y$.
Since $u(x)=1$ for $x\in \lambda B\setminus E_{r/4}$, and $K(x,y)\le c_DK(y,x)$ by doubling
for $x,y\in \lambda B$, $x\not=y$,
we have
\[ \begin{split}
\int_{\lambda B}\int_{\lambda B} K(x,y)\,dy\,dx 
& \le\int_{E_r\cap \lambda B}\int_{E_r\cap \lambda B} K(x,y) \,dy\,dx
    + (1+c_D)\int_{\lambda B\setminus E_r}\int_{E_{r/4}\cap\lambda B} K(x,y)\,dy\,dx\\
& =: I_1 + (1+c_D) I_2.
\end{split}
\]

Define
\[F_k = \{ x\in \lambda B : 2^{-k}\le d(x,E)<2^{-k+1}\}\]
and
\[A_j(x) = \{ y \in \lambda B : 2^{-j-1}\le d(y,x)<2^{-j}\},\]
for $k,j\in\Z$ and $x\in \lambda B$. Let also $k_1,k_2\in\Z$ be such that
\[
2^{-k_1-1}\le \lambda R<2^{-k_1} \quad \text{ and } \quad 2^{-k_2}\le r <2^{-k_2+1}.
\]

When $k\le k_2$ and $x\in F_k$, it holds that $E_{r/4}\cap A_j(x)=\emptyset$ for all $j \ge k+1$.
Using the estimate $\lvert u(x)-u(y)\rvert\le 1$ and changing also the orders
of summation and integration, we thus obtain
\[ \begin{split}
I_2 & \le \sum_{k=k_1}^{k_2}\int_{F_k} \sum_{j= k_1 - 1}^{k}\int_{E_{r/4}\cap A_j(x)}
\frac{1}{2^{-(j+1)st}\mu(B(x,d(x,y)))}\,dy\,dx\\
    & \le C \sum_{j=k_1-1}^{k_2} 2^{jst}  
    \int_{E_r\cap\lambda B} \sum_{k=j}^{k_2} \int_{\{x\in F_k : y\in A_j(x)\}}
\frac{1}{\mu(B(x,d(x,y)))}\,dx\,dy.
\end{split}
\]
But if $y\in A_j(x)$, then $d(x,y) < 2^{-j}$ and so
\[
\bigcup_{k=j}^{k_2}\{x\in F_k : y\in A_j(x)\}\subset B(y,2^{-j}). 
\]
Since $B(y,d(x,y))\subset B(x,2d(x,y))$, we obtain by doubling that (still for $y\in A_j(x)$)
\[\begin{split}
\mu(B(y,2^{-j})) & \le c_D \mu(B(y,2^{-j-1})) \le c_D \mu(B(y,d(x,y)))\\
& \le c_D \mu(B(x,2d(x,y)))\le c_D^2 \mu(B(x,d(x,y))).
\end{split}\]
We conclude 
that
\[ \begin{split}
I_2 
& \le C \sum_{j=k_1-1}^{k_2} 2^{jst}  
    \int_{E_r\cap\lambda B} \int_{B(y,2^{-j})}
\frac{1}{\mu(B(y,2^{-j}))}\,dx\,dy\\
& \le C \sum_{j=k_1-1}^{k_2} 2^{jst} \mu(E_r\cap\lambda B)
  \le C 2^{k_2 st} \mu(E_r\cap\lambda B) \le C r^{-st} \mu(E_r\cap\lambda B).  
\end{split}
\]

On the other hand, since $\lvert u(x)-u(y)\rvert^t\le \min\{1,4^t r^{-t}d(x,y)^t\}$ for all $x,y\in\lambda B$,
integral $I_1$ can be estimated as follows: 
\begin{equation}\label{eq.I_1}
\begin{split}
I_1 & \le \int_{E_r\cap\lambda B} \sum_{j=k_1-1}^{\infty}\int_{E_r\cap A_j(x)}
\frac{\lvert u(x)-u(y)\rvert^t}{d(x,y)^{st}\mu(B(x,d(x,y)))}\,dy\,dx\\
    & \le \int_{E_r\cap\lambda B} \sum_{j=k_1-1}^{k_2}\int_{E_r\cap A_j(x)}
   \frac{1}{2^{-(j+1)st}\mu(B(x,d(x,y)))}\,dy\,dx\\
    & \qquad + C \int_{E_r\cap\lambda B} \sum_{j=k_2}^{\infty}\int_{E_r\cap A_j(x)}
    \frac{r^{-t}d(x,y)^t}{d(x,y)^{st}\mu(B(x,d(x,y)))}\,dy\,dx.
\end{split}
\end{equation}
In the first integral on the right-hand side of~\eqref{eq.I_1}
\[
\int_{E_r\cap A_j(x)}  \frac{1}{2^{-(j+1)st}\mu(B(x,d(x,y)))}\,dy
\le \frac{1}{2^{-(j+1)st}} \int_{A_j(x)} \frac{1}{\mu(B(x,2^{-j-1}))}\,dy \le C2^{jst}
\]
since the measures of the balls $B(x,2^{-j})$ and $B(x,2^{-j-1})$ are comparable,
while in the second integral on the right-hand side of~\eqref{eq.I_1}
\[\begin{split}
\sum_{j=k_2}^{\infty}\int_{E_r\cap A_j(x)} &
    \frac{r^{-t}d(x,y)^t}{d(x,y)^{st}\mu(B(x,d(x,y)))}\,dy
      \le \sum_{j=k_2}^{\infty} r^{-t} \int_{A_j(x)}
        \frac{d(x,y)^{t(1-s)}}{\mu(B(x,d(x,y)))}\,dy\\
    & \le r^{-t} \sum_{j=k_2}^{\infty} 2^{-jt(1-s)} \int_{B(x,2^{-j})}
        \frac{1}{\mu(B(x,2^{-j-1}))}\,dy\\
    & \le C r^{-t} 2^{-k_2 t(1-s)} \le  C r^{-t} r^{t(1-s)} = Cr^{-st}.
\end{split}\]
Here we had again a converging geometric series since $t(1-s)>0$.

Substituting the above two estimates to~\eqref{eq.I_1}, we obtain
\[
\begin{split}
I_1 & \le C \int_{E_r\cap\lambda B} \sum_{j=k_1-1}^{k_2} 2^{jst}\,dx + 
    C \int_{E_r\cap\lambda B} r^{-st}\,dx \\
    & \le C \int_{E_r\cap\lambda B}  \bigl(2^{k_2st} + r^{-st}\bigr)\,dx
      \le C r^{-st}\mu(E_r\cap\lambda B).
\end{split}
\]
As $\lambda B=B(w,R_0)$, we conclude that $I_1 + (1+c_D) I_2 \le C r^{-st}\mu(E_r\cap B(w,R_0))$,
and this proves the claim.
\end{proof}

\def\cprime{$'$}

\end{document}